\documentclass[11pt, reqno, a4paper]{amsart}

\usepackage{amssymb, amsmath, amscd}
\usepackage{amsthm}

\usepackage{verbatim}

\usepackage{color}

\usepackage{etex}
\usepackage{dsfont}
\usepackage[matrix, arrow, curve, frame, arc]{xy}
\usepackage{pgf}
\usepackage{tikz-cd}
\usetikzlibrary{arrows,shapes,automata,backgrounds,petri,calc}
\usepackage[square,sort,comma,numbers]{natbib}
 \usepackage{url}
 \usepackage{hyperref}
 \usepackage[shortlabels]{enumitem} 

\newcommand{\tikzAngleOfLine}{\tikz@AngleOfLine}
\def\tikz@AngleOfLine(#1)(#2)#3{%
\pgfmathanglebetweenpoints{%
\pgfpointanchor{#1}{center}}{%
\pgfpointanchor{#2}{center}}
\pgfmathsetmacro{#3}{\pgfmathresult}%
}

\newcommand{\bN}{\mathbb{N}}

\newcommand{\bZ}{\mathbb{Z}}

\newcommand{\bP}{\mathbb{P}}

\newcommand{\wt}{\widetilde}
\newcommand{\ba}{\bar{\alpha}}
\newcommand{\La}{\Lambda}

\newcommand{\ve}{\varepsilon}

\newcommand{\cM}{\mathcal{M}}
\newcommand{\cK}{\mathcal{K}}

\newcommand{\cD}{\mathcal{D}}

\newcommand{\rad}{\operatorname{rad}}
\newcommand{\soc}{\operatorname{soc}}
\newcommand{\add}{\operatorname{add}}
\renewcommand{\ker}{\operatorname{Ker}}
\newcommand{\ima}{\operatorname{Im}} 
\newcommand{\cok}{\operatorname{Coker}} 
\newcommand{\silt}{\operatorname{silt}} 

\renewcommand{\mod}{\operatorname{mod}}
\newcommand{\op}{\operatorname{op}}
\newcommand{\Hom}{\operatorname{Hom}} 
\newcommand{\proj}{\operatorname{proj}}
\newcommand{\inj}{\operatorname{inj}}
\newcommand{\End}{\operatorname{End}}

\def\vec#1{\left[\begin{smallmatrix}#1\end{smallmatrix}\right]}

\begin{document}

\newtheorem{defi}{Definition}[section]
\newtheorem{rem}[defi]{Remark}
\newtheorem{prop}[defi]{Proposition}
\newtheorem{ques}[defi]{Question}
\newtheorem{lemma}[defi]{Lemma}
\newtheorem{cor}[defi]{Corollary}
\newtheorem{thm}[defi]{Theorem}
\newtheorem{expl}[defi]{Example} 
\newtheorem*{mthm}{Main Theorem}

\parindent0pt

\title[Iterated mutations]{Iterated mutations of symmetric periodic algebras$^*$ \footnote{$^*$\tiny This research has been supported from the research grant no. 2023/51/D/ST1/01214 of the Polish National Science Center.} }

\author[ ]{Adam Skowyrski}
\address[Adam Skowyrski]{Faculty of Mathematics and Computer Science, 
Nicolaus Copernicus University, Chopina 12/18, 87-100 Torun, Poland}
\email{skowyr@mat.umk.pl}

\subjclass[2020]{Primary: 16D50, 16E05, 18G80, 16E35}
\keywords{Symmetric algebra, Periodic algebra, Derived equivalence, Mutation} 

\begin{abstract} Following methods used by A. Dugas \cite{Du} for investigating derived equivalent pairs of 
(weakly) symmetric algebras, we apply them in a specific situation, obtaining new deep results concerning 
iterated mutations of symmetric periodic algebras. More specifically, for any symmetric algebra $\Lambda$, 
and an arbitrary vertex $i$ of its Gabriel quiver, one can define mutation $\mu_i(\Lambda)$ of $\Lambda$ at 
vertex $i$ via silting mutation of the stalk complex $\La$. Then $\mu_i(\Lambda)$ is again symmetric, and we 
can iterate this process. We want to understand the order of $\mu_i$, in case the vertex $i$ is $d$-periodic, i.e. 
the simple module $S_i$ associated to $i$ is periodic of period $d$ (with respect to the syzygy). The main result 
of this paper shows that then $\mu_i$ has order $d-2$, that is $\mu_i^{d-2}(\Lambda)\cong\Lambda$ (modulo socle), 
under some additional assumption on the (periodic) projective-injective resolution of $S_i$. Besides, we present briefly 
some consequences concerning arbitrary periodic vertex and give few sugestive examples showing that this property 
should hold in general, i.e. without restrictions on the periodic projective resolution. \end{abstract}

\maketitle

\section{Introduction and the main result}\label{sec:1} 

Throughout the paper, by an algebra we mean a basic finite-dimensional algebra over an algebraically 
closed field $K$. For simplicity, we assume that all algebras are connected. We denote by $\mod\La$ 
the category of finitely generated right $\La$-modules, and by $\proj\La$ its full subcategory formed by 
projective modules. \smallskip  

Every algebra admits a presentation, 
that is, we have an isomorphism $\La\cong KQ/I$, where $KQ$ is the path algebra of a quiver $Q=(Q_0,Q_1,s,t)$, and 
$I$ is an ideal generated by a finite number of so called relations, i.e. elements of $KQ$ being combinations of paths 
of length $\geqslant 2$ with common source and target (see Section \ref{sec:2} for more details). The quiver 
$Q$ is determined uniquely, up to permutation of vertices, and it is called the Gabriel quiver $Q_\La$ of 
an algebra $\La$. \smallskip 

For an algebra $\La=KQ/I$, we have a complete set $e_i$, $i\in Q_0=\{1,\dots,n\}$, of primitive idempotents, 
which induce the decomposition of $\La=e_1\La\oplus\dots e_n\La$, into a direct sum of all indecomposable projective 
modules $P_i=e_i\La$ in $\mod\La$. Dually, we have an associated decomposition of $D(\La)=D(\La e_1)\oplus \dots 
\oplus D(\La e_n)$ into a direct sum of all indecomposable injective modules $I_i=D(\La e_i)$ in $\mod\La$, where 
$$D=\Hom_K(-,K):\mod A^{\op}\to \mod \La$$ 
is the standard duality on $\mod\La$. By $S_i$ we denote the simple module $S_i=P_i/\rad P_i\simeq\soc(I_i)$ in $\mod\La$ 
associated to a vertex $i\in Q_0$. \smallskip 

A prominent class of algebras is formed by the self-injective algebras, for which $\La$ is injective as a $\La$-module, 
or equivalently, all projective and injective modules in $\mod\La$ coincide. In particular, then $I_i\simeq P_{\nu(i)}$, 
for the Nakayama permutation $\nu$, and hence, the top $S_i$ of $P_i$ is also the socle of $P_{\nu(i)}$. In case 
$\La$ is a symmetric algebra, that is, there exists an associative non-degenerate symmetric $K$-bilinear form 
$(-,-):\La\times\La\to K$, the permutation $\nu$ is identity, so we have isomorphisms $I_i\simeq P_i$ and 
$\soc(P_i)=\soc(I_i)=S_i$. In this article, we will focus on the symmetric algebras, although all the results 
work in more general setup, that is, for weakly symmetric algebras (i.e. satisfying $P_i\simeq I_i$, for all $i\in Q_0$). \smallskip 

Two self-injective algebras $\La,\La'$ are said to be socle equivalent if and only if the quotient algebras 
$\La/\soc(\La)$ and $\La'/\soc(\La')$ are isomorphic. In this case, we write $\La\sim\La'$. For $\La,\La'$ being 
symmetric, every $\soc(P_i)$ is generated by an element $\omega\in e_i\La e_i$, and $\La\sim\La'$ if and only if 
$\La$ and $\La'$ have the same presentations, except relations in $e_i\La e_i$ and $e_i\La' e_i$ may differ by a 
socle summand $\lambda\omega$, $\lambda\in K$. \medskip 

Another important algebras for this paper are the periodic algebras. Recall that, for a module $X$ in $\mod\La$, 
its syzygy $\Omega(X)$ is the kernel of an arbitrary projective cover of $X$ in $\mod\La$. A 
module $X$ in $\mod\La$ is called periodic, provided that $\Omega^d(X)\simeq X$, for some $d\geqslant 1$, and the 
smallest such $d$ is called the period of $X$. We will sometimes say that then $X$ is $d$-periodic. \smallskip 

By a periodic algebra we mean an algebra $\La$, which is periodic as a $\La$-bimodule, or equivalently, as a 
module over its enveloping algebra $\La^e=\La^{\op}\otimes_K\La$. Note that every periodic algebra $\La$ of 
period $d$ has periodic module category, that is, all non-projective modules in $\mod\La$ are periodic (with 
period dividing the period of $\La$ \cite[see Theorem IV.11.19]{SkY}). According to the results of \cite{GSS}, 
we know that for a periodic algebra $\La$, all simple modules in $\mod\La$ are periodic, so in particular, 
then $\La$ must be self-injective. \bigskip 

Given an algebra, we denote by $\cK^b_\La$ the homotopy category of bonded complexes of modules in $\proj\La$, 
which is a triangulated category with the suspension functor given by left shift $(-)[]$ (see \cite{Hap}).  
Recall \cite{Ai} that a complex $T$ in $\cK^b_\La$ is called silting, if it generates $\cK^b_\La$, and there are 
no non-zero morphisms from $T$ to its positive shifts $T[i]$, $i>0$. A complex $T$ is said to be basic if 
it is a direct sum of mutually non-isomorphic indecomposable objects. \smallskip 

Following \cite{Ai}, for any basic silting complex $T=X\oplus Y$ in $\cK^b_\La$, one can define its mutation $\mu_X(T)$, which 
is another silting complex called the mutation of $T$ with respect to $X$. Namely, $\mu_X(T)=X'\oplus Y\in\cK^b_\La$, where 
$X'$ is determined by a left $\add Y$-approximation $f:X\to Y'$ of $X$ and a triangle in $\cK^b_\La$ of the form 
$$\xymatrix{X\ar[r]^{f} & Y' \ar[r] & X' \ar[r] & X[1]}$$ 
(more details in Section \ref{sec:mut}). If $X$ is indecomposable, we say it is an irreducible mutation. Silting 
theory in general, deals with subcategories of triangulated categories, but we omit this context here. We will work 
only with homotopy categories $\cK^b_\La$, and only with complexes obtained from a stalk complex $\La$ (concentrated 
in degree $0$), by iterated irreducible mutations. In other words, we study a connected component of the 
silting quiver of $\La$ \cite[see Definition 2.41]{Ai} containing the complex $\La$. Actually, we will study the algebras 
not complexes, that is, the endomorphism algebras of these complexes. \medskip   

For a (weakly) symmetric algebra $\La$, the silting theory becomes slightly more accessible, since then 
all silting complexes are tilting (i.e. no non-zero morphisms $T\to T[i]$, $i\neq 0$). If $\La=KQ/I$ is a symmetric 
algebra, then for any vertex $i\in Q_0$, we have the irreducible mutation $\mu_{P_i}(\La)$ with respect to 
projective $P_i$, which is a tilting complex (as well as $\La$, and again basic). We may consider the endomorphism 
algebra of this complex: 
$$\mu_i(\La):=\End_{\cK^b_\La}(\mu_{P_i}(\La)),$$ 
which we call the mutation of $\La$ at vertex $i$. Since $\mu_i(\La)$ is a tilting complex, the well known results 
of Rickard (see \cite{Rick}) imply that $\mu_i(\La)$ is derived equivalent to $\La$, and hence, it is again a symmetric 
algebra. Moreover, derived equivalencs preserves also periodicity, hence $\mu_i(\La)$ is symmetric and periodic, if 
$\La$ is (see also Section \ref{sec:2}). This motivates to study the mutation class $\mu_\bullet(\La)$ consisting of 
isoclasses of all algebras obtained from $\La$ by iterated mutations. One of the consequences of our main result shows 
that the mutation class of a preprojective algebra coincides with its socle equivalence class (see Corollary 2 below). \medskip 

Our general concern is the study of iterated mutations of a symmetric algebra $\La$ at a fixed vertex $i$, that is, 
the algebras $\mu_i^k(\La)$, for $k\geqslant 1$. Particularly, we want to understand the behaviour of iterations 
$\mu_i^k(\La)$ if $k$ grows, in case the simple module $S_i=P_i/\rad P_i$ is $d$-periodic. \medskip 

The main theorem of this paper relates periodicity of $S_i$ to periodicity of the iterations $\mu_i^k(\La)$ 
of $\La$ at a fixed vertex $i$. Recall that a projective resolution of periodic simple $S_i$ is gives 
rise to an exact sequence of the form 
$$\xymatrix{0 \ar[r] & S_i \ar[r]^{d^0} & P_i \ar[r]^{d^1} & P_i^1 \ar[r]^{d^2} & \dots \ar[r] & 
P_i^{d-2} \ar[r]^{g} & P_i \ar[r] & S_i \ar[r] & 0},$$ 
where $d$ is the period of $S_i$. The main result of this 
article proves periodicity of $\mu_i$ (up to socle equivalence), under an additional assumption on its projective 
resolution. 

\begin{mthm} If $\La=P_i\oplus Q$, and all modules $P_i^k$ belong to $\add Q$, then 
$\mu_i^{d-2}(\La)\sim\La$. \end{mthm}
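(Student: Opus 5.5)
We plan to compute the iterated mutations explicitly as endomorphism algebras of two-term-like complexes built from the periodic resolution of $S_i$, and to show that after $d-2$ steps the resulting tilting complex is, up to shift and homotopy, the stalk complex $\La$ again. Write the resolution as in the statement, with $P_i^0:=P_i$ and $P_i^1,\dots,P_i^{d-2}\in\add Q$. The first mutation is $\mu_{P_i}(\La)=T_1\oplus Q$, where $T_1$ is the cone of a minimal left $\add Q$-approximation $P_i\to Q'$. Since $\Hom(P_i,Q)\cong e_{Q}\La e_i$-type data and the socle of $P_i$ is $S_i$, which is not in the image of any map to $\add Q$ only through the socle, the minimal left approximation is the map $d^1:P_i\to P_i^1$ (its kernel is $\soc P_i=S_i$, and $P_i^1$ is the injective envelope of $\Omega^{-1}S_i$, all of whose indecomposables lie in $\add Q$ by hypothesis). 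Hence $T_1=(P_i\xrightarrow{d^1}P_i^1)$.

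Inductively I will prove that for $1\leqslant k\leqslant d-2$ the $k$-th iterated mutation is $T_k\oplus Q$ with
$$T_k=\bigl(P_i\xrightarrow{d^1}P_i^1\xrightarrow{d^2}\cdots\xrightarrow{d^k}P_i^k\bigr),$$
and that $\mu_i^k(\La)\cong\End(T_k\oplus Q)$. The point is that, after identifying $\mu_i^{k}(\La)$ with $\End(T_k\oplus Q)$ (using Rickard's equivalence to transport the mutation of the projective at the vertex corresponding to $T_k$ back to $\cK^b_\La$, i.e.\ mutation commutes with the derived equivalence), the next mutation is the cone of a minimal left $\add Q$-approximation of $T_k$. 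Maps $T_k\to Q[0]$ up to homotopy are computed by $\Hom(P_i^k,Q)$ modulo those factoring through $d^k$, i.e.\ by $\Hom(\cok d^k, Q)=\Hom(\Omega^{-k}S_i,Q)$; the injective envelope $\Omega^{-k}S_i\hookrightarrow P_i^{k+1}$ (with $P_i^{k+1}\in\add Q$ for $k+1\leqslant d-2$) gives the minimal left approximation, so the cone is $T_{k+1}$. This is the step requiring most care: one must check minimality and that homotopies do not produce extra maps, which uses that $d^{k+1}$ is a radical map and exactness of the resolution.

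Finally, at step $k=d-2$ we get $T_{d-2}=(P_i\to P_i^1\to\cdots\to P_i^{d-2})$, and exactness of the full periodic sequence shows that $T_{d-2}$ has cohomology only at the last term, equal to $\cok d^{d-2}\cong\Omega^{-(d-2)}S_i\cong\Omega^{2}S_i\cong\ker(P_i\to S_i)=\rad P_i$ (using $\Omega^d S_i\cong S_i$) and at the first term $S_i$. Thus $T_{d-2}$ is not quasi-isomorphic to a stalk, and the main obstacle is to compare $\End(T_{d-2}\oplus Q)$ with $\La$ directly: I will construct a homomorphism $\End(T_{d-2}\oplus Q)\to\La$ modulo socle by noting that $\Hom(T_{d-2},Q)\cong\Hom(\rad P_i,Q)$-type spaces, $\Hom(Q,T_{d-2})$ similarly via $g$, and $\End(T_{d-2})$ modulo maps factoring through the socle-type endomorphism coincides with $e_i\La e_i/\soc$; matching compositions then identifies the two algebras modulo socle, the ambiguity in the endomorphism ring of $T_{d-2}$ (a degree-$(d-2)$ homotopy class through $S_i\to P_i$) being exactly a socle element, which is why only $\mu_i^{d-2}(\La)\sim\La$ rather than isomorphism is obtained. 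Comparing dimensions via the Cartan matrix (which is preserved since the class of $T_{d-2}$ in $K_0$ equals $\pm[P_i]$ modulo $\add Q$ by the alternating sum of the resolution) finishes the argument.
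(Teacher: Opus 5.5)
\textbf{Where you agree with the paper.} Your architecture is the paper's. You realise $\mu_i^k(\La)$ as $\End_{\cK^b_\La}(T_k\oplus Q)$, show by injectivity of $Q$ that the coresolution maps $d^{k+1}$ are the successive minimal left $\add Q$-approximations, and then compare $\End_{\cK^b_\La}(T_{d-2}\oplus Q)$ with $\La$ block by block modulo socle. For the first step you transport silting mutation along the Rickard equivalence $\cK^b_{\mu_i^k(\La)}\simeq\cK^b_\La$ sending the regular module to $T_k\oplus Q$. This is legitimate, since mutation is defined intrinsically in the triangulated category, and it is more direct than the paper's appeal to Dugas's Theorem 4.1.

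\textbf{Corrections.} Drop the opening goal that $T_{d-2}$ is a shifted stalk complex; you show yourself that it has cohomology $S_i$ and $\rad P_i$. Also fix two slips:
\begin{enumerate}
\item There are no non-zero homotopies for maps $T_k\to Q'$, so $\Hom_{\cK^b_\La}(T_k,Q')$ is the \emph{subspace} $\{h\colon P_i^k\to Q'\mid hd^k=0\}\cong\Hom_\La(\cok d^k,Q')$, not a quotient.
\item $\cok d^k\cong\Omega^{-(k+1)}S_i$, so $\cok d^{d-2}\cong\Omega S_i=\rad P_i$, not $\Omega^2S_i$.
\end{enumerate}

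\textbf{The gap.} The real gap is the last step, which is where the content of the theorem lies. You assert two things without any map realising them: that $\End_{\cK^b_\La}(T_{d-2})$ modulo the socle-type endomorphism is $e_i\La e_i/\soc$, and that compositions match across blocks. The argument you say finishes the proof is also not valid. A derived equivalence changes the Cartan matrix only by a congruence $C\mapsto X^{T}CX$, and knowing $[T_{d-2}]$ only modulo the classes of $\add Q$ does not determine $\dim\End(T_{d-2}\oplus Q)$. In any case, a dimension count cannot supply the missing multiplicative map.

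\textbf{How the paper closes it.} The paper builds $\Phi$ starting from $\La$:
\begin{enumerate}
\item on $\Hom(P_i,Q)$ it composes with $d_+$;
\item on $\Hom(Q,P_i)$ it lifts through $d_+$;
\item for $f\in e_i\La e_i$ it lifts $f$ to a chain endomorphism of the periodic resolution, and uses explicit null-homotopies to show that the restriction to $T_{d-2}$ is well defined up to $K\cdot[(\omega,0,\dots,0)]$.
\end{enumerate}

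\textbf{A cleaner route in your direction.} Your own computation of $H^0(T_{d-2})$ points to it. The functor $H^0$ induces an algebra homomorphism $\End_{\cK^b_\La}(T_{d-2}\oplus Q)\to\End_\La(\rad P_i\oplus Q)$, so compatibility with composition is automatic.
\begin{enumerate}
\item It is bijective on the blocks involving $Q$, by your computations.
\item On $\End(T_{d-2})$ it is surjective, by lifting along the resolution.
\item Its kernel there is spanned by $[(\omega,0,\dots,0)]$: a chain map inducing zero on $\cok d^{d-2}$ can be homotoped down until only a component $P_i\to\ker d^1=\soc P_i$ remains.
\end{enumerate}
On the other side, $\rad P_i\oplus Q=e_iJ\oplus(1-e_i)\La$ is a left ideal of $\La$. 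Restriction therefore gives a surjection $\La\to\End_\La(\rad P_i\oplus Q)$, by self-injectivity, with kernel $\soc(e_i\La)$.

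It remains to see that $[(\omega,0,\dots,0)]$ spans $\soc(\tilde P_i)$. A one-dimensional two-sided ideal in $e_iAe_i$ lies in the socle because the radical is nilpotent. It cannot vanish, because $A$ is weakly symmetric while $\La/\soc(e_i\La)$ is not: the socle of $P_i/\soc P_i$ embeds in $P_i^1\in\add Q$. This yields the paper's stronger statement $A/\soc(\tilde P_i)\cong\La/\soc(P_i)$.
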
 \smallskip 

We will also derive a few interesting consequences of the above theorem. As a first one, we get the following 
immediate corollary. \medskip 

{\bf Corollary 1.} {\it Let $\La$ be a symmetric algebra of period $4$. If $i$ is a vertex of $Q_\La$ and there is 
no loop at $i$, then $\mu_i^2(\La)\sim\La$.} \medskip 

In particular, for any symmetric algebra $\La=KQ/I$ of period $4$, whose Gabriel quiver 
does not contain loops, we have $\mu_i^2(\La)\sim\La$, for all $i\in Q_0$. The following corollary is the next 
consequence of the Main Theorem, applied now in the smallest possible case $d=3$. \medskip 

{\bf Corollary 2.} {\it If $\La=KQ/I$ is a symmetric periodic algebra of period $3$, then $\mu_i(\La)\sim\La$, for all vertices 
$i\in Q_0$ without loop. If $Q$ has no loops, then the mutation class $\mu_\bullet(\La)$ consists of the 
algebras socle equivalent to $\La$.} \smallskip 

In particular, if the field $K$ is of characteritic two, then socle equivalence can be replaced by isomorphism. In 
the proof, we need only periodicity of simples (which is forced by periodicity of the algebra), so we can formulate 
a bit stronger consequence: every twisted $3$-periodic algebra $\La$ having no loops in its Gabriel quiver satisfies 
$\mu_i(\La)\sim\La$. In particular, in characteristic $\neq 2$, it means that the mutation class $\mu_\bullet(\La)$ 
of $\La$ consists only of the isoclass of $\La$. Similar result is well known for preprojective algebras, however all 
of them have periods $2$ or $6$, which seems to be a peculiar example, when period of $\mu_i$ is $1$, but it is 
not a consequence of period of simples. \medskip 

Following the proof of the Main Theorem, we observed that the claim can be proved under slightly weaker assumptions. 
Namely, we do not need to assume that the `middle terms' $P_i^k$ are in $\add Q$. It is sufficient to assume 
that $P_i$ is $\add Q$-periodic (of period $m$), that is, there exists an exact sequence of the form 
$$\xymatrix{P_i \ar[r]^{f^1} & P_i^{(1)} \ar[r]^{f^2} & \dots \ar[r]^{f^m} & P_i^{(m)} \ar[r]^{g} & P_i}$$ 
such that $g$ is a right $\add Q$-approximation, and maps $f^k:P_i^{(k-1)}\to P_i^{(k)}$, for $k\in\{1,\dots,m\}$, 
induce the following left $\add Q$-approximations  
$$(0,\dots,f^k):(\xymatrix{P_i \ar[r]^{f^1} & P_i^{(1)} \ar[r]^{d^2} & \dots \ar[r]^{f^{k-1}} & 
\underline{P_i^{(k-1)}} })\to P^{(k)}_i$$
in the homotopy category (note that the domain is a complex, with degree zero underlined). Then the same arguments show 
that $\mu_i^m(\La)\sim\La$. We mention that if the simple module $S_i$ is periodic of period $d$, with $P_i^k\in\add Q$, 
then $P_i$ is $\add Q$-periodic of period $m=d-2$. A direct corollary from the proof of the Main Theorem gives the following 
result. \medskip 

{\bf Corollary 3.} {\it If $P_i$ is an $\add Q$-periodic module of period $m$, then $\mu_i^m(\La)\sim\La$.} \medskip 

Finally, we will apply the Main Theorem for the special class of algebras $\La$, called the weighted surface algebras. 
We recall that these algebras were introduced and investigated in \cite{WSA,WSAG,WSAC}, where the technical details 
can be found (a bit more in Section \ref{sec:cor4}). We only note that weighted surface algebras are tame symmetric 
algebras of period $4$, except few cases, called the exceptional algebras (see \cite[Section 3]{WSAG} for details) 
and studying their mutations is an important task towards the classification of all tame symmetric algebra of 
period $4$. Indeed, it has been shown \cite{AGQT} that weighted surface algebras exhaust all tame symmetric algebras of 
period $4$ except one family, in case the Gabriel quiver of the algebra is $2$-regular (two arrows start and end at 
any vertex). The remaining exotic family consists of the so called higher tetrahedral algebras \cite{HTA}, which are 
`higher' versions of one of the exceptional algebras. Similarly, we have another exotic family of the so called higher 
spherical algebras \cite{HSA}, and both the higher tetrahedral and higher shperical algebras turned out to be mutations of 
weighted surface algebras (cf. \cite[Section 5]{HSS}). Actually, studying examples of mutations of certain weighted surface 
algebra, we discovered that the second iteration of the mutation at given vertex is always an algebra isomorphic 
to the original one. We wanted to understand what causes this phenomenon, and this led to the results of this paper. \medskip  

{\bf Corollary 4.} {\it Let $\La=KQ/I$ be a weighted surface algebra. Then for any vertex $i\in Q_0$ without loop, we have an 
isomorphism $\mu_i^2(\La)\cong\La$. If there is a loop at $i$, then we have $\mu_i^2(\La)\sim\La$.} \medskip 

The paper is organized as follows. In Section \ref{sec:2}, we give a quick recap through basic concepts of the 
representation theory needed in this paper. Section \ref{sec:mut} is devoted to discuss the definition of the mutation 
an algebra at vertex, while Section \ref{sec:3}, to present the most important facts concerning iterated 
mutations (at given vertex). Then Section \ref{sec:4} provides the proof of the Main Theorem, and also some 
additional considerations about $\add Q$-resolutions and $\add Q$-periodicity, including proofs of Corollaries 
1-3. Finally, in Section \ref{sec:6}, we gives two examples of mutations of a symmetric algebra $\La$ of period $4$, 
which satisfy $\mu_i^2(\La)\sim\La$, and vertex $i$ has a loop (equivalently, assumptions of the Main Theorem are 
not satisfied). Section \ref{sec:cor4} contains a short proof of Corollary 4. \medskip 

For basic background on the representation theory, we refer the reader to the books \cite{ASS,SkY}.  

\bigskip 

\section*{Acknowledgements} The results of this paper were partially presented at the conference 
`Advances in Representation Theory of Algebras (ARTA X)' in Colongne (September 2025). An extended abstract 
of the talk will be published in conference proceedings in LMS Lecture Notes Series (Cambridge University press). 
The author has been supported from the research grant no. 2023/51/D/ST1/01214 of the Polish National Science Center. 

\bigskip 

\section{Basic notions}\label{sec:2} 

For an algebra $\La$, by $\mod\La$ we denote the category of finite-dimensional (right) $\La$-modules 
and by $J_\La$ the Jacobson radical of $\La$. The full subcategory of $\mod\La$ consisting of projective 
(respectively, injective) modules is denoted by $\proj\La$ (respectively, by $\inj\La$). If $M$ is a module 
in $\mod\La$, we write $\add M$ for the additive category of $M$ (that is, the full subcategory of $\mod\La$ 
consisting of modules isomorphic to a direct summand of $M^n$, $n\geqslant 1$). We use the same convention 
for an object $M$ in any abelian category. \medskip 

By a {\it quiver} we mean a quadruple $Q=(Q_0,Q_1,s,t)$, where $Q_0$ is a finite set (of vertices), $Q_1$ a 
finite set (of arrows), and $s,t$ are functions $Q_1\to Q_0$ attaching to every arrow $\alpha\in Q_1$ its 
source $s(\alpha)$ and target $t(\alpha)$. Recall that for a quiver $Q$, its path algebra $KQ$ is the $K$-algebra 
with basis indexed by paths of length $\geqslant 0$ in $Q$ (and multiplication given by concatenation on basis). 
Moreover, the ideal $R_Q$ generated by all paths of length $\geqslant 1$ is the Jacobson radical of $KQ$. 
Note also that, over algebraically closed fields, any algebra $\La$ admits a presentation 
by {\it quiver and relations}, i.e. $\La$ is isomorphic to a quotient $KQ/I$, where $KQ$ is the path algebra 
of a finite quiver $Q$, and $I$ is an {\it admissible} ideal, which means $R_Q^m\subseteq I\subseteq R_Q^2$, 
for some $m\geqslant 2$ (or equivalently, $I$ contains $R_Q^m$ and it is generated by a finite number of so 
called {\it relations}, that is, linear combinations of paths of length $\geqslant 2$ with common source and 
target). The quiver $Q$, for which we have a presentation $\La=KQ/I$, is uniquely determined up to permutation 
of vertices, it is called the {\it Gabriel quiver} of $\La$, and denoted by $Q_\La$. We note that the arrows 
$i\to j$ in $Q_\La$ are in one-to-one correspondence with a $K$-basis of space $e_i J e_j/e_i J^2 e_j$, where 
$J=J_\La=R_Q+I$ is the Jacobson radical of the algebra $\La$. \medskip 

Whenever we consider an algebra $\La$, we assume a fixed presentation $\La=KQ/I$, $Q=Q_\La$. 
Clearly, then the trivial paths $\ve_i\in KQ$ (of length $0$), for $i\in Q_0$, induce a 
complete set of pairwise orthogonal primitive idempotents $e_i=\ve_i+I\in\La$, which sum up to the identity. 
We recall that the modules $P_i:=e_i\La$ form a complete set of pairwise non-isomorphic indecomposable 
modules in $\proj\La$, and each $P_i$, $i\in Q_0$, admits a basis formed by cosets of all paths in $Q$ 
starting from vertex $i$ (modulo $I$). Dually, the left module $\La e_i$ has a basis formed by paths in $Q$ 
ending at $i$. Any arrow $\alpha:i\to j$ (or more generally, an element of $e_i\La e_j$) will be often identified 
with the corresponding homomorphism $\alpha:P_j\to P_i$ in $\proj\La$, given as a left multiplication by $\alpha$. \medskip 

Let $\La$ be an algebra. For a module $M$ in $\mod\La$, we denote by $\Omega(M)$ the syzygy of $M$, that is 
the kernel $\Omega(M):=\ker(p)$ of an arbitrary projective cover $p:P\to M$ of $\mod\La$. Dually, we denote by 
$\Omega^{-1}(M)$ the inverse syzygy of $M$, which is the cokernel $\Omega^{-1}(M):=\cok(e)$ of an arbitrary 
injective envelope $e:M\to E$ of $M$ in $\mod\La$. Both $\Omega(M)$ and $\Omega^{-1}(M)$ do not depend on 
the choice of a projective cover (respectively, injective envelope), up to isomorphism. 
We will sometimes write $\Omega_\La$ or $\Omega^{-1}_\La$, skipping $\La$ if it is clear from the context. 
\smallskip 

By a projective resolution of a module $M$ in $\mod\La$ we mean any exact sequence in $\mod\La$ of the form 
$$\xymatrix{\dots \ar[r]^{d_2} & P_1 \ar[r]^{d_1} & P_0 \ar[r]^{d_0} & M \ar[r] & 0 }$$ 
where for any $n\in\bN$, the epimorphism $P_n\to \ima(d_n)$ induced by $d_n$ is a projective cover 
of $\ima(d_n)=\ker(d_{n-1})$. Projective resolution is uniquely determined up to isomorphism of complexes 
and $\ima(d_n)\simeq\Omega^n(M)$, for all $n\in\bN$. Dually, by an injective resolution of a module $M$ in 
$\mod\La$ we mean any exact sequence in $\mod\La$ of the form 
$$\xymatrix{0 \ar[r]^{} & M \ar[r]^{d^0} & I^0 \ar[r]^{d^1} & I^1 \ar[r]^{d^2} & \dots}$$ 
where for any $n\in\bN$, the monomorphism $\cok(d_{n-1})=I^{n-1}/\ker(d_n)\to I^n$ induced by $d^n$ is an 
injective envelope of $\cok(d_{n-1})$. Similarily, injective resolution is uniquely determined up to 
isomorphism of complexes and $\cok(d_{n-1})\simeq\Omega^{-n}(M)$, for all $n\in\bN$. \medskip

Recall that a module $M$ in $\mod \La$ is said to be {\it periodic} if it is periodic with respect to syzygy, 
i.e. $\Omega^d_\La(M)\simeq M$, for some $d\geqslant 1$. Then minimal such $d$ is called the {\it period} of $M$. 
We will sometimes say that $M$ is $d$-periodic, if $M$ is periodic of period $d$. Note that for any $d$-periodic 
module $M$, we have $P_{n+d}\simeq P_n$, for all $n\geqslant 0$, and moreover, also $\Omega^{-d}(M)\simeq M$, 
so analogous $d$-periodicity holds for the injective envelope. In particular, since projective and injective modules 
coincide over self-injective algebras, one can see that the injective envelope is determined by its first $d$ terms and it 
is given as follows 
$$\xymatrix{0 \ar[r]^{} & M \ar[r]^{d^0} & P_{d-1} \ar[r]^{d^1} & P_{d-2} \ar[r]^{d^2} & \dots \ar[r]^{d^{d-1}}  & 
P_0\ar[r] & \cdots }$$ 
where $I^k=P_{d-1-k}$, $d^{k}=d_{d-1-k}$, except $d^0$ is an embedding $M\simeq Ker(d_{d-1})\to P_{d-1}$. 
We only note that every algebra with periodic simple modules must be self-injective (see \cite{GSS}). 
\smallskip 

An algebra $\La$ is called {\it periodic of period} $d$, provided that $\La$ is $d$-periodic as a bimodule, or 
equivalently, as a (right) module over the enveloping algebra $\La^{\op}\otimes_K\La$. Periodicity 
of an algebra $\La$ implies periodicity of all indecomposable non-projective modules in $\mod\La$, 
and their periods divide the period of $\La$ \cite[see Theorem IV.11.19]{SkY}. In particular, any simple module $S$ in 
$\mod\La$ is periodic, and therefore, $\La$ is self-injective. \medskip

\bigskip 
 
Given an algebra $\La$, we denote by $\cK^b(\mod\La)$ the homotopy category of bounded complexes of modules in 
$\mod\La$ and by $\cK^b_\La$ its (full) subcategory $\cK^b(\proj\La)$ formed by bounded complexes of projective 
modules. The derived category $\cD^b(\mod\La)$ of $\La$ is the localization of $\cK^b(\mod\La)$ with respect to 
quasi-isomorphisms, and admits the structure of a triangulated category, where the suspension functor is given 
by left shift $(-)[1]$; see \cite{Hap}. \smallskip 

Recall that also $\cK^b_\La$ admits the structure of a triangulated category, where the suspension functor is 
the left shift $(-)[1]$. Moreover, it is known that the `canonical' triangulated structure on 
$\cK^b_\La$ is given by the set of distinguished triangles isomorphic to the triangles of the form 
$$\xymatrix{X \ar[r]^{f} & Y \ar[r] & C(f) \ar[r] & X[1]}$$ 
where $f:X\to Y$ is an arbitrary morphism in $\cK^b_\La$ and $C(f)$ is its {\it cone}, that is the complex 
$C(f)=(\xymatrix{C_i\ar[r]^{d^C_i} & C_{i+1}})_{i\in\bZ}\in\cK^b_\La$ given by the modules 
$C_i=X_{i+1}\oplus Y_i=X[1]_i\oplus Y_i$ and the following differentials 
$$d^C_i=\left[\begin{array}{cc} d^X_{i+1} & \\ f_{i+1} & d^Y_i \end{array}\right]$$
induced from the morphism $f=(f_i:X_i\to Y_i)$ and the differentials of 
$X=(\xymatrix{X_i\ar[r]^{d^X_i} & X_{i+1}})$ and $Y=(\xymatrix{Y_i\ar[r]^{d^Y_i} & Y_{i+1}})$; see for example 
\cite{Shah}. Clearly, the remaining map defining the triangle are the cannonical inclusion $\vec{0\\1}: Y\to C(f)$ 
and the cannonical projection $[1 \ 0]:C(f)\to X[1]$. \smallskip  

Two algebras $\La$ and $\La'$ are said to be {\it derived equivalent}, provided that their derived categories 
$\cD^b(\La)$ and $\cD^b(\La')$ are equivalent as triangulated categories. Thanks to the results of Rickard 
from \cite[see Theorem 6.4]{Rick}, we have the following criterion for derived equivalence of algebras. 

\begin{thm}\label{thm:Rick1} Algebras $\La$ and $\La'$ are derived equivalent if and only if $\La'$ is 
of the form $\Lambda'\cong\End_{\cK^b_\La}(T)$, for a complex $T\in \cK^b_\La$ satisfying the following 
two conditions: \begin{enumerate}
\item[(1)] $\Hom_{\cK^b_\La}(T,T[i])=0$, for all $i\neq 0$, 
\item[(2)] $\add T$ generates $\cK^b_\La$ as a triangulated category.  
\end{enumerate} \end{thm}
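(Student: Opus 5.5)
The plan is to prove the two implications separately. I will use the standard description of $\cK^b_\La$ inside $\cD^b(\mod\La)$ as the perfect (compact) objects: a complex $X$ is in $\cK^b_\La$ if and only if, for every $Y\in\cD^b(\mod\La)$, we have $\Hom(X,Y[i])=0$ for all but finitely many $i$. This property is intrinsic to the triangulated category, so any triangle equivalence preserves it.

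For the implication "derived equivalent $\Rightarrow$ tilting complex exists", fix a triangle equivalence $F:\cD^b(\mod\La')\to\cD^b(\mod\La)$. By the intrinsic characterisation above, $F$ restricts to an equivalence $\cK^b_{\La'}\to\cK^b_\La$. Put $T:=F(\La')$. Then:
\begin{itemize}
\item $\Hom(T,T[i])\cong\Hom(\La',\La'[i])=H^i(\La')$, which vanishes for $i\neq 0$, giving condition (1);
\item $\End(T)\cong\End(\La')\cong\La'$;
\item $\La'$ generates $\cK^b_{\La'}$ (every bounded complex of projectives is an iterated cone of shifts of objects in $\add\La'$, via brutal truncations), so $\add T$ generates $\cK^b_\La$, giving condition (2).
\end{itemize}

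The converse is the real content, and I would argue it with a dg enhancement rather than Rickard's original explicit construction.
\begin{enumerate}
\item Regard $T$ as a complex of projectives and let $B=\mathcal{E}nd^\bullet_\La(T)$ be its dg endomorphism algebra. By (1), $H^*(B)$ is concentrated in degree $0$ and equals $\La'$.
\item The zig-zag $B\leftarrow\tau_{\leqslant 0}B\to H^0(B)=\La'$ consists of quasi-isomorphisms of dg algebras. Hence $\cD(B)\simeq\cD(\La')$ as triangulated categories.
\item Consider the triangle functor $G=\mathbf{R}\Hom_\La(T,-):\cD(\La)\to\cD(B)\simeq\cD(\La')$. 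It sends $T$ to $B\cong\La'$ and induces isomorphisms $\Hom(T,T[i])\to\Hom(\La',\La'[i])$ for all $i$.
\item By dévissage (the five lemma on triangles), $G$ is fully faithful on the thick subcategory generated by $T$. By (2) this thick subcategory is all of $\cK^b_\La$.
\item The essential image of $G$ on $\cK^b_\La$ is thick and contains $\La'$, so it is $\cK^b_{\La'}$. This gives $\cK^b_\La\simeq\cK^b_{\La'}$.
\end{enumerate}

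To pass to $\cD^b(\mod)$ I would use compact generation. $T$ is a compact generator of $\cD(\La)$, and $\La'$ is a compact generator of $\cD(\La')$. Since $G$ commutes with coproducts and is an equivalence on compact generators, it is an equivalence $\cD(\La)\simeq\cD(\La')$. For a finite-dimensional algebra, $\cD^b(\mod\La)$ is intrinsically the full subcategory of objects $X$ such that, for every compact $P$, $\bigoplus_i\Hom(P,X[i])$ is finite-dimensional. The equivalence preserves this characterisation, so it restricts to $\cD^b(\mod\La)\simeq\cD^b(\mod\La')$.

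I expect the main obstacle to be making the functor genuinely triangulated and well defined. Naively sending $\add T$ to $\proj\La'$ via $\Hom(T,-)$ only works up to homotopy, and totalising complexes of complexes requires coherent higher homotopies. This is exactly what Rickard resolves by an inductive construction that uses the vanishing of negative self-extensions. The dg approach of Keller packages the same issue into step 2: the truncation quasi-isomorphism to $\La'$ is available precisely because of condition (1). I would check that step, together with the intrinsic characterisations of compact and bounded objects, most carefully.
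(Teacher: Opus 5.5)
Your proof is correct, but there is nothing in the paper to check it against step by step. The paper does not prove this theorem; it quotes Rickard \cite[Theorem 6.4]{Rick}. The meaningful comparison is therefore with Rickard's argument.

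\textbf{Forward direction.} This is essentially Rickard's own argument: take $T=F(\La')$ and use the intrinsic description of $\cK^b_\La$ inside the derived category by the vanishing of $\Hom(X,Y[i])$ for almost all $i$. One small point: $\La'\cong\End(T)$ is an isomorphism of $K$-algebras only when $F$ is $K$-linear. Otherwise it is just a ring isomorphism, which is how Rickard states it.

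\textbf{Converse.} Here you genuinely diverge from Rickard.
\begin{itemize}
\item Rickard never leaves homotopy categories of projectives. Through $\proj\La'\simeq\add T$, he reads a bounded-above complex of projective $\La'$-modules as a complex in $\cK^b_\La$ whose differentials compose to zero only up to homotopy. He turns it into an honest complex in $\cK^-(\proj\La)$ by an inductive totalisation, whose obstructions lie in $\Hom(T,T[i])$ for $i<0$. This gives $\cK^-(\proj\La')\simeq\cK^-(\proj\La)$, and he then descends to $\cK^b$ and $\cD^b$ by intrinsic characterisations inside $\cK^-$.
\item You replace that construction with Keller's dg endomorphism algebra. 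Condition (1) enters once, through the truncation zig-zag $B\leftarrow\tau_{\leqslant 0}B\to H^0(B)$. You also replace $\cK^-$ by the unbounded derived category and compact generation.
\end{itemize}

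\textbf{What each route buys.} Rickard's route is elementary and self-contained, using nothing beyond homotopy categories. Yours is shorter once dg modules and compactly generated categories are available, and it gives more. By construction your equivalence is a derived tensor product with a complex of $\La'$-$\La$-bimodules, namely the base change of $T$ along $\tau_{\leqslant 0}B\to H^0(B)$. So it is a standard derived equivalence, which Rickard only proved later in a separate paper.

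The points you flagged as delicate all hold for finite-dimensional algebras by standard arguments:
\begin{itemize}
\item the zig-zag equivalence $\cD(B)\simeq\cD(\La')$;
\item thickness of the essential image, via idempotent completeness of $\cK^b_\La$;
\item the two intrinsic characterisations.
\end{itemize}
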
 

Following \cite{Rick}, a complex $T\in\cK^b_\La$ satisfying conditions (1)-(2) in the above theorem is called 
the {\it tilting complex} (over $\La$). In other words, the derived equivalence class of an algebra $\La$ consists 
of all endomorphism algebras of tilting complexes over $\La$. \smallskip 

If we replace $i\neq 0$, by $i>0$ in condition (1), we obtain the definition of a silting complex. We only note 
\cite[see Example 2.8]{Ai} that over symmetric algebras, the notions of silting and tilting complex coincide. \medskip 

Recall also that the derived equivalence preserves symmetricity, as the following result shows \cite[Corollary 5.3]{Rick2}. 

\begin{thm}\label{der:symm} Let $\La$ and $\La'$ be derived equivalent algebras. Then $\La$ is symmetric if and 
only if $\La'$ is symmetric. \end{thm}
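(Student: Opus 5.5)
The plan is to work with two-sided tilting complexes rather than with one-sided ones. I will use the characterization that an algebra $A$ is symmetric if and only if $A\cong D(A)$ as $A$-$A$-bimodules. Since being derived equivalent is a symmetric relation, it suffices to prove one implication: if $\La$ is symmetric, then so is $\La'$.

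First, Theorem \ref{thm:Rick1} gives a tilting complex $T\in\cK^b_\La$ with $\End_{\cK^b_\La}(T)\cong\La'$. By Rickard's refinement of this result, $T$ can be lifted to a two-sided tilting complex $X$: a bounded complex of $\La'$-$\La$-bimodules, perfect on each side. It has an inverse $Y$, which is a bounded complex of $\La$-$\La'$-bimodules, and satisfies
\[
X\otimes^{\mathbf L}_\La Y\cong \La', \qquad Y\otimes^{\mathbf L}_{\La'}X\cong\La
\]
in the derived categories of bimodules. Moreover $Y\cong \mathbf{R}\Hom_\La(X,\La)$. I would take these facts as known input, citing Rickard's paper on derived functors, since they are the standard machinery.

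Second, I establish the bimodule isomorphism
\[
X\otimes^{\mathbf L}_\La D(\La)\cong D\,\mathbf{R}\Hom_\La(X,\La)\cong D(Y).
\]
This holds naturally for any bounded complex $X$ whose terms are finitely generated projective right $\La$-modules. The argument is to check the natural map $P\otimes_\La D(\La)\to D\Hom_\La(P,\La)$ on $P=\La$, extend it additively to $\proj\La$, and then extend it termwise to complexes. One must verify that the map respects the left $\La'$-action coming from $X$ and the right $\La$-action coming from $D(\La)$; I expect this bookkeeping to be the main technical point. Now, if $\La$ is symmetric, substituting $D(\La)\cong\La$ gives $X\cong D(Y)$ as complexes of $\La'$-$\La$-bimodules.

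Third, I compute $D(\La')$ using tensor-Hom adjunction:
\[
D(\La')\cong D\bigl(X\otimes^{\mathbf L}_\La Y\bigr)\cong \mathbf{R}\Hom_\La\bigl(X,D(Y)\bigr)\cong \mathbf{R}\Hom_\La(X,X)\cong \La'.
\]
The last isomorphism uses the tilting property of $X$: its endomorphism complex is concentrated in degree $0$, where it equals $\La'$. All isomorphisms here are bimodule isomorphisms. Since both ends are modules (stalk complexes), this gives $D(\La')\cong\La'$ as $\La'$-$\La'$-bimodules, so $\La'$ is symmetric. The converse follows by exchanging the roles of $\La$ and $\La'$.

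The main obstacle is ensuring that every step is an isomorphism of bimodule complexes and not merely of one-sided complexes. This is why I pass to the two-sided tilting complex: with only the one-sided $T$, the isomorphism $D(\La')\cong\La'$ could be obtained only as right modules, which is too weak to conclude symmetry.
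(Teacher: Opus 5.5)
Your argument is correct. It is the standard two-sided tilting complex proof: when $\La$ is symmetric you get $X\cong D(Y)$, and then $D(\La')\cong \mathbf{R}\Hom_\La(X,X)\cong\La'$ as bimodules. This is the argument behind the result the paper relies on, Corollary 5.3 of Rickard's paper on derived equivalences as derived functors; the paper gives no proof of its own and simply cites that corollary.
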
 \smallskip 

Let us also mention that two symmetric algebras $\La,\La'$ are derived equivalent if and only if they are {\it stable 
equivalent}, that is, their stable module catgories $\underline{\mod}\La$ and $\underline{\mod}\La'$ are 
equivalent. Note also that for a symmetric algebra $\La$ its stable module category is isomorphic (as a 
triangulated category) with the homotopy category $\cK^b_\La$. \medskip  

Symmetricity of an algebra $\La$ guarantees that mutation defined in the next section is an algebra derived 
equivalent to $\La$. \smallskip 

It is known since late 80's that periodicity of an algebra is also preserved (together with period), due to 
the following result \cite[see Theorem 2.9]{ES08}. 

\begin{thm}\label{der:per} Let $\La,\La'$ be derived equivalent algebras. Then $\La$ is periodic if and only if 
$\La'$ is periodic. Moreover, if this is the case, then both have the same period. \end{thm}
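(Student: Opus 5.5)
The plan is to recast periodicity of $\La$ as a property of the bimodule $\La$ inside a triangulated category attached to $\La^e=\La^{\op}\otimes_K\La$ that only depends on $\La$ up to derived equivalence. By symmetry it suffices to show: if $\La$ is periodic of period $d$, then $\La'$ is periodic and $\Omega^d_{\La'^e}(\La')\simeq\La'$. We then apply the same argument to $\La'$ to obtain the reverse divisibility, so the minimal periods agree.

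\emph{Step 1: lift the equivalence to enveloping algebras.} By Rickard's theory (refining Theorem \ref{thm:Rick1}), a derived equivalence is induced by a two-sided tilting complex $X\in\cD^b(\La'^{\op}\otimes\La)$ with inverse $Y\in\cD^b(\La^{\op}\otimes\La')$. Each of these is perfect as a one-sided complex on either side. Set
$$F=X\otimes^{\mathbf L}_\La-\otimes^{\mathbf L}_\La Y:\cD^b(\La^e)\to\cD^b(\La'^e).$$
Then $F$ is a triangle equivalence, with quasi-inverse $Y\otimes^{\mathbf L}_{\La'}-\otimes^{\mathbf L}_{\La'}X$. It sends $\La$ to $X\otimes^{\mathbf L}_\La Y\simeq\La'$. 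It also sends perfect complexes to perfect complexes, because $F(\La^e)\simeq X\otimes_K Y$ is perfect over $\La'^e$ by one-sided perfectness of $X$ and $Y$. Hence $F$ induces a triangle equivalence of singularity categories
$$\cD^b(\La^e)/\cK^b(\proj\La^e)\;\to\;\cD^b(\La'^e)/\cK^b(\proj\La'^e)$$
that commutes with the shift and maps the image of $\La$ to the image of $\La'$.

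\emph{Step 2: self-injectivity.} Since $\La$ is periodic, its simple modules are periodic, so $\La$ is self-injective (\cite{GSS}). Derived equivalence preserves self-injectivity (Rickard, Al-Nofayee), so $\La'$ is self-injective as well. Consequently $\La^e$ and $\La'^e$ are self-injective. By Buchweitz--Rickard, each singularity category above is then equivalent to the corresponding stable module category $\underline{\mod}\,\La^e$, respectively $\underline{\mod}\,\La'^e$. Under these equivalences the shift $[-1]$ corresponds to $\Omega$.

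\emph{Step 3: transport periodicity.} Periodicity of $\La$ gives $\Omega^d_{\La^e}(\La)\simeq\La$, hence $\La\simeq\La[-d]$ in the singularity category. Applying $F$ yields $\La'\simeq\La'[-d]$, that is, $\Omega^d_{\La'^e}(\La')\simeq\La'$ in $\underline{\mod}\,\La'^e$. To upgrade this stable isomorphism to a genuine one, note that $\Omega^d(\La')$ has no projective summands by construction. Moreover, $\La'$ has no projective bimodule summand as long as $\La'$ is connected and not separable (the separable case is degenerate, and is excluded because $\La$ is non-semisimple periodic, a property preserved by $F$). Two modules without projective summands that are isomorphic in the stable category are isomorphic, so $\La'$ is $d$-periodic as a bimodule.

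The main obstacle is Step 1, namely the existence of a two-sided tilting complex together with the perfectness of $F(\La^e)$ over $\La'^e$. This is exactly Rickard's result on lifting derived equivalences to enveloping algebras (\cite{Rick2}), and I would cite it rather than reprove it. The identification of $\Omega$ with $[-1]$ and the removal of projective summands are routine once self-injectivity is known.
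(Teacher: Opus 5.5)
The paper gives no proof of this statement and simply cites \cite[Theorem 2.9]{ES08}; your argument is correct and is essentially the standard proof behind that reference: a two-sided tilting complex lifts the equivalence to $\La^e$ and $\La'^e$ and sends $\La$ to $\La'$, self-injectivity lets you identify the singularity categories with stable module categories in which the shift is $\Omega^{-1}$, and then $\Omega^d\La\simeq\La$ transports to $\Omega^d\La'\simeq\La'$, which becomes a genuine isomorphism once projective bimodule summands are ruled out. The exclusion of the separable case is cleanest if you argue directly that $\La$ is non-projective over the self-injective algebra $\La^e$, hence non-zero in its singularity category, so $\La'=F(\La)$ is non-zero in the singularity category of $\La'^e$, i.e.\ not projective as a bimodule.
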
 \smallskip 

\begin{rem}\normalfont We eventually mention that derived equivalence also preserves representation type 
in the class of self-injective algebras, i.e. if $\La,\La'$ are derived equivalent self-injective algebras, 
then $\La$ is tame (respectively, wild) if and only if $\La'$ is tame (wild). Moreover, the type of growth 
(domestic, polynomial or non-polynomial) is also preserved. For the proofs we refer the reader to \cite[see Section 2]{HSS}.   
\end{rem} 

\bigskip

\section{Definition of the mutation}\label{sec:mut} 

In this section, we discuss the definition of the mutation of an algebra $\La$ at vertex $i$ of its Gabriel quiver, 
which is precisely the endomorphism algebra of an irreducible silting mutation of the stalk complex $\La$ 
with respect the (indecomposable) summand $P_i$. We also define related notion of mutation class formed by 
all endomorphism algebras of silting complexes reachable from $\La$ by iterated irreducible mutations. \smallskip 

Fix an algebra $\La=KQ/I$ and a module $M$ in $\mod\La$. For a module $X$, we say that a homomorphism 
$f:X\to M'$ is a {\it left $\add M$-approximation of $X$} if $M'$ belongs to $\add M$ and for any homomorphism 
$h:X\to M''$ with $M''$ in $\add M$, there is a homomorphism $h':M'\to M''$ such that $h=h'f$. Equivalently, 
$f:X\to M'$ is a left $\add M$-approximation if and only if for any $M''$ in $\add M$, the induced map 
$$\Hom_A(f,M''):\Hom_\La(M',M'')\to\Hom_\La(X,M'')$$ 
is surjective. Dually, there is a notion of right $\add M$-approximation of a module $X$. \smallskip 

The notion of a left (right) approximation works for any abelian category and its objects $X,M$. In this 
paper, we will use it also for the homotopy categories $\cK^b_\La$. If the category is triangulated (such 
as $\cK^b_\La$), we have a stronger notion of a left (right) $\add\langle M\rangle$-approximation of an 
object $X$. Specifically, if $X,M$ are objects of a triangulated category, say $\cK^b_\La$, then a morphism 
$f:X\to M'$ is called a left $\add\langle M \rangle$-approximation (of $X$), if any morphism $h:X\to M''[i]$, 
for $M''\in\add M$ and $i\in\bZ$, factorizes through $f$. The notions of $\add M$- and 
$\add\langle M\rangle$-approximations may differ in general, but in this paper, we can assume they coincide 
(see the proof of Theorem \ref{thm:}). \medskip  

Now, we recall the definition of a silting mutation, due to Aihara \cite{Ai}. Note that a complex in 
$\cK^b_\La$ is called {\it basic} if it is a direct sum of $n=|\La|$ pairwise non-isomorphic indecomposable 
objects. We first recall the definition of a silting mutation. \medskip 

We will work only with silting objects, instead of subcategories. Actually, for an algebra $\La$ the set 
of all silting subcategories of $\cK^b_\La$ (in the sense of \cite[Definition 2.1]{Ai}) can be regarded 
as the set $\silt(\La)$ of isoclasses of silting complexes \cite[see also Proposition 2.21]{Ai}. Moreover, 
$\La$ is always a silting complex, so every silting subcategory $\cM$ has an additive generator 
$M$ such that $\cM=\add M$, and $M$ is a silting complex \cite[see Proposition 2.20]{Ai}. In this way, 
we identify silting subcategories with their unique (up to isomorphism) basic generator. \smallskip 

Suppose that $T = X\oplus Y\in\cK^b_\La$ is a basic silting complex. Then consider any triangle 
$$\xymatrix{X\ar[r]^{f} & Y'\ar[r]^{g} & X'\ar[r] &X[1]}$$ 
where $f$ is a left $\add Y$-approximation of $X$. Then $\mu_X(T):=X'\oplus Y$ is a silting complex called 
the {\it left mutation of $T$ with respect to $X$}. There is a dual notion of right mutation. Such mutations 
are called irreducible if $X$ is indecomposable. \medskip 

\begin{rem} \normalfont With the above notation the additive category $\add \mu_X(T)=\add X'\oplus Y$ is the same 
as $\add Y\cup\{M'; \ M\in \add(T)\}$, which is exactly the left mutation 
$$\mu^+(\add(T);\add(Y))$$ 
of a silting subcategory $\add T$ (with respect to a subcategory $\add Y$) in the sense of 
\cite[Definition 2.30]{Ai}. In particular, $\add \mu_X(T)$ does not depend on the choice of left 
$\add Y$-approximation. \end{rem} \medskip 

Note that \cite[see Corollary 2.28]{Ai} the total number of non-isomorphic indecomposable direct summands of $T$ 
and $\mu_X(T)$ stays the same, hence in particular, if $T$ is basic, then so is $\mu_X(T)$. This means that indecomposable 
$X'$ is defined up to isomorphism, and it is the cone $X'\simeq C(f)$ of some left $\add Y$-approximation $f$. \smallskip 

With this, take a vertex $i$ of the Gabriel quiver $Q_\La$ of $\La$ and consider $\La=X\oplus Y$, $X=P_i$, 
as a stalk complex. Then $\La$ is a basic silting complex and we may define the {\it mutation of $\La$ at 
vertex $i$} as the endomorphism algebra 
$$\mu_i(\La):=\End_{\cK^b_\La}(\mu_{P_i}(\La))$$ 
of an irreducible silting mutation $\mu_{P_i}(\La)=X'\oplus Y$ of $\La$ with respect to the indecomposable 
direct summand $P_i$. If $\La$ is also symmetric, $\mu_{P_i}(\La)$ is a tilting complex, and hence $\mu_i(\La)$ 
is derived equivalent to $\La$, by Theorem \ref{thm:Rick1}. As a result, also $\mu_i(\La)$ is symmetric, by Theorem 
\ref{der:symm}, and moreover, we have an equivalence of categories 
$$\Hom_{}(T,-): \add\mu_{P_i}(\La)\to \proj \mu_i(\La) .$$ 
By the above remark, the additive category of a silting mutation does not depend on the choice of left 
approximation, and hence, also $\mu_i(\La)$ is a basic symmetric algebra, defined up to isomorphism, with 
the Gabriel quiver $Q_{\mu_i(\La)}$ having the same vertex set as $Q_\La$. \medskip 

This leads to a definition of mutation class. Namely, by the {\it mutation class} of a symmetric algebra $\La$ 
we mean the set $\mu_\bullet(\La)$ of all isoclasses of algebras of the form 
$$\mu_{i_r,\dots,i_1}(\La):=\mu_{i_r}(\cdots    \mu_{i_2}(\mu_{i_1}(\La))\cdots),$$ 
where $(i_r,\dots,i_1)\in Q_0^r$ are arbitrary sequences of vertices of $Q$, $r\geqslant 0$. Note that $\mu_{\bullet}(\La)$ 
is the set of endomorphism algebras of complexes obtained from $\La$ via iterated irreducible silting mutations, 
and it is not the same as the set of endomorphism algebras of all possible iterated mutations of $\La$, i.e. 
also with respect to a decomposable direct summands. Due to recent results of A. Dugas and J. August, we can 
consider notion of the mutation class for a wider context, namely, for any weakly symmetric algebra \cite[see 
Proposition 3.2]{AuDu}. In this paper we will not deal with non-symmetric algebras, however it is an interesting 
further direction. \medskip   

In general, a description of the mutation class is equivalent to a desription of the mutation class of a silting 
complex $\La$, which is a fairly hard task. For silting connected algebras, it is equivalent to the description 
of the set of all silting complexes over $\La$, and not much is known. The main result of this paper shows that 
periodicity of a simple module at vertex $i$ is revealed in periodicity of the mutation at $i$, so it is natural to expect 
similar periodicity at the level of complexes, i.e. iterated mutations of $\La$ at fixed direct summand are finite 
initial subcomplexes of an infinite periodic complex associated to the periodic approximation (see Remark 
\ref{rem:periodicapprox}). \medskip 

Nevertheless, infinitely many silting complexes $\mu_{P_i}^k(\La)$ shall give finitely many endomorphism algebras, 
which is partially confirmed by the Main Theorem. Moreover, we conjecture that for a periodic algebra $\La$ the mutation 
class $\mu_\bullet(\La)$ is finite. In this case all vertices are periodic, and for most of them, we have 
$\mu_i^{d-2}(\La)\cong\La$, so the problem requires first to extend this property for all vertices, and then show that 
mutations at two different vertices commute. Both of these claims are non-trivial, and their proofs are part of an 
ongoing project (work in progress). The first one is slightly touched in this paper (see Sections \ref{sec:4}-\ref{sec:cor4}). 

\begin{rem}\label{rkaprox} \normalfont 

It is worth pointing out that for a (basic) silting complex $T=\La=X\oplus Y$, both 
$X,Y$ are projective and the left $\add(Y)$-approximation can be read of the presentation of the 
algebra $\La=KQ/I$. \smallskip 

Namely, if $X=e_i\La=P_i$, then any left $\add Y$-approximation is determined by a set of paths $p_1,\dots,p_s\in(1-e_i)\La e_i$ 
such that any $a\in e_j\La e_i$ with $e_j$ being a summand of $1-e_i$ factorizes as $a=a_1p_1+\dots+a_sp_s$. 
This is clear when there is no loop at $i$ in $Q$. Then every path $p\in e_j\La e_i$, $e_j$ a summand of $1-e_i$, 
factorizes as $p=p'\alpha$, where $\alpha$ is one of the arrows $\alpha_1,\dots,\alpha_s$ ending at $i$. No loops at 
$i$ forces that every $\alpha_k:i_k\to i$ has $i_k\neq i$, so $e_{i_k}$ is a summand of $1-e_i$. It follows 
that the left $\add Y$-approximation in this case is the map 
$$f=[\alpha_1 \ \cdots \ \alpha_s]^T:P_i\to P_{i_1}\oplus\dots\oplus P_{i_s}=P_i^-,$$ 
given by arrows $p_k=\alpha_k:i_k\to i$, $k\in\{1,\dots,s\}$, ending at $i$ (identified with homomorphisms $P_i\to P_{i_k}$). \smallskip  

If $i$ admits a loop, then the description of a left $\add Q$-approximation becomes more complicated, since we must 
consider maps corresponding to paths of the form $\alpha\rho^p$, where $\alpha:j\to i$ is an arrow ($j\neq i$), and 
$\rho:i\to i$ a loop at $i$. This gets too technical even in case of one loop, so we skip the details. Actually, we 
will not need the precise form of an approximation in case of a loop at vertex. We will need it only in two particular 
examples in Section \ref{sec:6}, where we find the approximations directly, and they involve only paths $\alpha\rho$, 
which gives a relatively easy form. \end{rem}

\section{Iterating mutation at periodic vertex}\label{sec:3} 

Let $\La$ be a (weakly) symmetric algebra and $i$ a vertex of the Gabriel quiver $Q_\La$ of $\La$ 
such that the simple module $S_i$ in $\mod\La$ is $d$-periodic. Then there exists an 
exact sequence in $\mod\La$ of the form 
$$\xymatrix{0 \ar[r]^{} & S_i \ar[r]^{d^0} & P_i \ar[r]^{d^1} & P^1_i \ar[r]^{d^2} & \dots \ar[r]^{d^{d-1}}  & 
P^{d-1}=P_i \ar[r] & S_i\ar[r] & 0 }$$ 
where $Im(d^k)$ is isomorphic to $\Omega_\La^{-k}(S_i)$. \medskip 

The aim of this section is to describe iterated mutations $\mu_i^k(\La)$, $k\geqslant 1$, of $\La$ at a fixed 
periodic vertex $i$. To do this, we construct a sequence of specific (left) approximations in $\cK^b_{\La}$, which 
are related to the above exact sequence. We will show that the two sequences 
coincide under some additional assumption, and parallely discuss the general case. \medskip 

First, we derive the following result, which is an application of \cite[Theorem 4.1]{Du} in a special setup. 
Below we abbreviate $(X,Y):=\Hom_{\cK^b_{\La}}(X,Y)$ for complexes in $\cK^b_{\La}$.  

\begin{thm}\label{thm:} Let $T=X\oplus Q$ be a basic tilting complex in $\cK^b_{\La}$ such that 
$Q$ is a stalk complex concentrated in degree $0$, $X$ is indecomposable (concentrated in degrees 
$\leqslant 0$) and $\mu_X(T)$ is again tilting. Assume 
also that $f:X\to Q'$ is a left $\add Q$-approximation of $X$ in $\cK^b_\La$. Then the following 
statements hold. \begin{enumerate} 
\item[(1)] The induced map $\tilde{f}=(T,f):(T,X)\to (T,Q')$ 
is a left $\add(T,Q)$-approximation of $(T,X)$ in $\proj\tilde{\La}$, where 
$\tilde{\La}=\End_{\cK^b_{\La}}(T)$. 
\item[(2)] For any triangle $\xymatrix@C=0.5cm{X\ar[r]^{f} & Q'\ar[r]^{g} & Y \ar[r] & X[1]}$ 
in $\cK^b_{\La}$, the tilting complex $\tilde{T}=\mu_{(T,X)}(\tilde{\La})$ 
in $\cK^b_{\tilde{\La}}$ satisfies 
$$\End_{\cK^b_{\tilde{\La}}}(\tilde{T})\cong \End_{\cK^b_{\La}}(Y\oplus Q).$$ 
\end{enumerate} 
\end{thm}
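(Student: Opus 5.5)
The plan is to exploit the derived equivalence $F=\Hom_{\cK^b_\La}(T,-)$ together with its inverse. By Theorem \ref{thm:Rick1} there is a triangle equivalence $G:\cK^b_{\tilde{\La}}\to\cK^b_\La$ with $G(\tilde{\La})\cong T$. On $\add T$, $F$ restricts to an equivalence $\add T\to\proj\tilde{\La}$, so $F(X)=(T,X)$ and $F(Q)=(T,Q)$. Statement (1) then reduces to a transport-of-structure argument. For any $Q''\in\add Q$, full faithfulness of $F$ on $\add T$ identifies $\Hom_{\tilde{\La}}((T,Q'),(T,Q''))\to\Hom_{\tilde{\La}}((T,X),(T,Q''))$ with $(f,Q''):(Q',Q'')\to(X,Q'')$, which is surjective because $f$ is a left $\add Q$-approximation. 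Every object of $\add (T,Q)$ has the form $(T,Q'')$, so $\tilde f$ is a left $\add(T,Q)$-approximation in $\proj\tilde{\La}$. It is also one in $\cK^b_{\tilde{\La}}$, since morphisms between stalk projectives in degree $0$ in the homotopy category are just module maps.

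For (2), I would work entirely inside $\cK^b_{\tilde{\La}}$ and then transport via $G$. Write $\tilde{\La}=(T,X)\oplus(T,Q)$. By definition, $\tilde T=\mu_{(T,X)}(\tilde{\La})=\tilde Y\oplus(T,Q)$, where $\tilde Y$ is the cone of a left $\add(T,Q)$-approximation of $(T,X)$. By (1) we may take this approximation to be $\tilde f$; the Remark after the definition of mutation guarantees that $\add\tilde T$, and hence its endomorphism algebra, does not depend on this choice. So we have a triangle $(T,X)\xrightarrow{\tilde f}(T,Q')\to\tilde Y\to(T,X)[1]$ in $\cK^b_{\tilde{\La}}$. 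Apply the triangle functor $G$. Since $G((T,M))\cong M$ naturally for $M\in\add T$ (both are $G\circ F$ restricted to $\add T$, and $G\circ F\cong\mathrm{id}$ there because $G(\tilde{\La})\cong T$), we obtain a triangle $X\xrightarrow{f'}Q'\to G(\tilde Y)\to X[1]$ in which $f'$ corresponds to $f$ under this identification. The main point to check here is naturality, so that $G(\tilde f)$ really is isomorphic to $f$ as a morphism. Granting this, the uniqueness of cones up to (non-unique) isomorphism gives $G(\tilde Y)\cong Y$. Hence $G(\tilde T)\cong Y\oplus Q$, and since $G$ is fully faithful, $\End_{\cK^b_{\tilde{\La}}}(\tilde T)\cong\End_{\cK^b_\La}(Y\oplus Q)$.

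The main obstacle is the step $G\circ F|_{\add T}\cong\mathrm{id}$ as a natural isomorphism compatible with $f$. Rickard's theorem only provides an equivalence sending $\tilde{\La}$ to $T$, not a canonical one. I would first use the standard fact that such a $G$ can be chosen so that on $\proj\tilde{\La}$ it agrees with the additive equivalence $\proj\tilde{\La}\simeq\add T$ inverse to $F$. This holds because the full subcategories $\add\tilde{\La}$ and $\add T$ are both determined by the endomorphism ring $\tilde{\La}$, and $G$ induces an isomorphism $\End(\tilde{\La})\cong\End(T)$ which can be corrected by an automorphism if needed. If this gets delicate, a fallback is to use Dugas' result \cite[Theorem 4.1]{Du} directly, since the statement is essentially a specialization of it. The hypothesis that $\mu_X(T)$ is tilting is used only to know that $Y\oplus Q$ is tilting, so that both sides are the endomorphism algebras relevant for the iteration. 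The degree hypotheses on $X$ and $Q$ are not needed for this argument beyond guaranteeing that $f$ lies in $\cK^b_\La$.
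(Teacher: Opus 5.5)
Your proof is correct. Part (1) is argued as in the paper, via the equivalence $\Hom_{\cK^b_\La}(T,-):\add T\to\proj\tilde{\La}$. Your additional remark that $\tilde f$ is then also an approximation in $\cK^b_{\tilde{\La}}$ is a point the paper leaves implicit.

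For part (2) your route is genuinely different. The paper takes $Y=C(f)$. It uses that $X$ lives in degrees $\leqslant 0$ and that $Q$ is a stalk complex in degree $0$ to see that $Y$ is concentrated in degrees $\leqslant 0$ with $Y_0=Q'$, and deduces that $g:Q'\to Y$ is a right $\add Q$-approximation. It then uses tilting of $Y\oplus Q$ to upgrade $g$ (and likewise $f$) to an $\add\langle Q\rangle$-approximation. Finally it invokes \cite[Theorem 4.1]{Du} as a black box, which is exactly your fallback.

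You instead treat (2) as compatibility of silting mutation with triangle equivalences. Take a triangle equivalence $G:\cK^b_{\tilde{\La}}\to\cK^b_\La$ whose restriction to $\proj\tilde{\La}$ is quasi-inverse to $\Hom_{\cK^b_\La}(T,-)|_{\add T}$. It carries the mutation triangle of $(T,X)$ to a triangle isomorphic to $X\to Q'\to Y\to X[1]$, hence sends $\tilde T$ to $Y\oplus Q$. This is self-contained and more general: it shows that the degree hypotheses and the assumption that $\mu_X(T)$ is tilting are not needed for (2).

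The step you flag is sound, and the correction really is necessary, since an arbitrary $G$ with $G(\tilde{\La})\cong T$ need not send $(T,X)$ to $X$. To fix it, twist $G$ by the algebra automorphism of $\tilde{\La}$ that $G$ induces via $\End(\tilde{\La})\cong\tilde{\La}=\End(T)$. Then use that additive functors on $\proj\tilde{\La}=\add\tilde{\La}$ are determined up to natural isomorphism by their effect on $\tilde{\La}$ and its endomorphisms. Alternatively, use a standard equivalence given by a two-sided tilting complex (Rickard, Keller).

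What the paper's route buys in exchange is the concrete fact that $f$ and $g$ are $\add\langle Q\rangle$-approximations. The author refers back to this in Section~\ref{sec:mut} to justify treating $\add Q$- and $\add\langle Q\rangle$-approximations as the same.
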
 

\begin{proof} Recall that the indecomposable projective modules over any endomorphism algebra 
$\End_{\cK^b_\La}(T)$, $T\in\cK^b_\La$, are of the form $(T,T')$, where $T'$ runs through all 
indecomposable direct summands of $T$. In particular, $\tilde{\La}:=\End_{\cK^b_\La}(M)$ is a 
basic algebra if $T$ is basic. In this case, we have an equivalence of categories 
$$\Hom_{\cK^b_\La}(T,-):\add T \to \proj \tilde{\La},$$ 
which sends indecomposable direct summands of $T$ to indecomposable modules in $\proj\tilde{\La}$. 
Moreover, if $T=X\oplus Q$, then $f:X\to Q'$ is a left $\add Q$-approximation of $X$ (in $\add Q$) 
if and only if $(T,f):(T,X)\to (T,Q')$ is a left $\add(T,Q)$-approximation of $(T,X)$ in $\proj\tilde{\La}$. 
This shows the first part of the claim, since $T=X\oplus Q$ is basic, by the assumptions. \medskip 

Now, suppose we have a triangle as in (2). We can may assume that it is the `canonical' triangle with the morphism 
$g:Q'\to Y=C(f)$ identified with its degree zero part $g_0=1_{Q'}:Q'\to Y_0=X[1]\oplus Q'=Q'$ given by the inclusion 
of the second summand of $Y_0$ (actually, the first turns out to be zero). Observe that $g:Q'\to Y$ is a right 
$\add Q$-approximation of $Y$ in $\cK^b_\La$. Indeed, by the assumptions on $X$ and $Y$, we conclude that $Y_i=X_{i+1}$, 
for $i\leqslant -1$, and $Y_0=Q'$, whereas for $i\geqslant 1$, we have $Y_i=0$. This means that $Y=C(f)$ is also 
concentrated in $\leqslant 0$ degrees. Moreover, it follows that any morphism $h:Q''\to Y$, for $Q''\in\add Q$ is 
given by its degree zero part $h_0=g_0h_0:Q''\to Y_0=Q'$, and we have an obvious factorization $h=gh_0$, so $h=gh_0$. 
\smallskip 

Finally note that $g$ is also a right $\add\langle Q\rangle$-approximation of $Y$, because there is no non-zero 
morphism $Q''[i]\to Y$, $i\neq 0$, with $Q''\in\add Q$ (it would imply a non-zero morphism $Q''\to Y[-i]$, which 
is impossible in case $Y\oplus Q$ is tilting). Similarly, one can show that a left $\add Q$-approximation $f:X\to Q'$ 
is simultaneously a left $\add\langle Q\rangle$-approximation of $X$. As a result, the assumptions (a)-(b) of 
\cite[Theorem 4.1]{Du} are satisfied, and (2) follows. \end{proof} \medskip 

We get the following corollary, which helps to control the iterated mutations. 

\begin{cor}\label{cor:mutcone} 
$\mu_{(T,X)}(\End_{\cK^b_{\La}}(X\oplus Q))\cong \End_{\cK^b_{\La}}(C(f)\oplus Q)$, 
where $f:X\to Q'$ is a left $\add Q$-approximtation of $X$ in $\cK^b_{\La}$.  
\end{cor}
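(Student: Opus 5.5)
This corollary is essentially a restatement of Theorem \ref{thm:}(2) with a specific choice of triangle, so the plan is to put ourselves in that setting and then specialise. We work under the standing hypotheses of Theorem \ref{thm:}: $T=X\oplus Q$ is a basic tilting complex in $\cK^b_\La$, $Q$ is a stalk complex in degree $0$, $X$ is indecomposable and concentrated in degrees $\leqslant 0$, and $\mu_X(T)$ is tilting. Put $\tilde{\La}=\End_{\cK^b_\La}(T)$, and let $f:X\to Q'$ be any left $\add Q$-approximation of $X$ in $\cK^b_\La$.

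The first step is to fix the triangle. The canonical triangle
$$\xymatrix{X\ar[r]^{f} & Q'\ar[r]^{\vec{0\\1}} & C(f)\ar[r]^{[1\ 0]} & X[1]}$$
is distinguished by the description of the triangulated structure in Section \ref{sec:2}. It is therefore admissible in Theorem \ref{thm:}(2) with $Y=C(f)$. That statement then gives
$$\End_{\cK^b_{\tilde{\La}}}\bigl(\mu_{(T,X)}(\tilde{\La})\bigr)\cong\End_{\cK^b_\La}(C(f)\oplus Q).$$

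The second step is to read this in the notation of the corollary. Here $\mu_{(T,X)}(\End_{\cK^b_\La}(X\oplus Q))$ denotes the mutation of the algebra $\tilde{\La}$ at the vertex corresponding to the indecomposable projective $(T,X)$. By the definition in Section \ref{sec:mut}, this is exactly the endomorphism algebra of the silting (here tilting) complex $\mu_{(T,X)}(\tilde{\La})$ in $\cK^b_{\tilde{\La}}$. Theorem \ref{thm:}(1) guarantees that $(T,f)$ is a left $\add(T,Q)$-approximation. Hence the cone of $(T,f)$ is the mutated summand, and the identification of $\mu_{(T,X)}(\tilde\La)$ with $\tilde T$ in Theorem \ref{thm:}(2) is legitimate.

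The only point that needs care, and the main obstacle, is independence of choices. The corollary is stated for an arbitrary approximation $f$, whereas the mutation $\mu_{(T,X)}$ on the algebra side is defined only up to the choice of approximation. By the remark in Section \ref{sec:mut}, $\add$ of the mutated complex does not depend on that choice. Since the mutated complex is basic, it is determined up to isomorphism, and so is its endomorphism algebra. On the $\La$ side, changing $f$ changes $C(f)$ only up to adding or removing summands from $\add Q$. Such summands do not affect $\add(C(f)\oplus Q)$, and so they do not affect the basic algebra up to the usual identification. With these two observations the isomorphism is well defined, and the corollary follows.
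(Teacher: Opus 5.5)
Your proposal is correct and matches the paper. The corollary is simply Theorem~\ref{thm:}(2) applied to the canonical distinguished triangle $X\to Q'\to C(f)\to X[1]$, and the paper's proof of that theorem already reduces to exactly this triangle.

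Your final paragraph on independence of the choice of $f$ adds care the paper leaves implicit. For a non-minimal $f$ the cone $C(f)$ picks up extra summands from $\add Q$, so the stated isomorphism holds literally only for minimal $f$ and otherwise only after passing to basic algebras. Your ``usual identification'' correctly covers this.
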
 

Note that the above result in the case, when $T=\La=X\oplus Q$, $X=P_i$, is equivalent to the definition 
of mutation of $\La$ at $i$, since then $\tilde{\Lambda}=\End_{\cK^b_{\La}}(\La)\cong \La$ 
and $(T,X)\simeq P_i$, so we obtain that 
$$\mu_i(\La)\cong \End_{\cK^b_{\La}}(Y\oplus Q),$$ 
where $f:X\to Q'$ is a left $\add Q$-approximation of $X$ in $\proj\La$ and $Y=C(f)$ is a complex of 
the form $$C(f)=(\xymatrix@C=0.5cm{P_i\ar[r]^{f} & Q'})$$ 
concentrated in degrees $-1$ and $0$.  Now, we want to iterate, so find a mutation $\mu_i^2=\mu_i(\mu_i(\La))$, 
which is isomorphic to the mutation $\mu_{(Y\oplus Q,Y)}(\End_{\cK^b_\La}(Y\oplus Q))$ of the endomorphism 
algebra of $Y\oplus Q$. We will use the following notation 
$$P^{(1)}:=Q', \ f^1:=f \ \mbox{and} \ X_1:=(\xymatrix@C=0.5cm{P_i \ar[r]^{f^1} & P^{(1)} }),$$ 
so that $X_1=Y$. Since the new $T:=Y\oplus Q=X_1\oplus Q$ is again tilting, we conclude from the above 
corollary that $\mu_i^2(\La)=\mu_{(T,X_1)}( \End_{\cK^b_\La}(X_1\oplus Q))\cong \End_{\cK^b_{\La}}(C(F)\oplus Q)$, 
where $F:X_1\to P^{(2)}$ is a left $\add Q$-approximation of $X_1$ in $\cK^b_\La$. Any such $F$ is of the 
form $F=(0,f^2)$, where $f^2:P^{(1)}\to P^{(2)}$ satisfies $f^2f^1=0$ and $P^{(2)}\in\add Q$, and every 
$g:X\to Q''$ with $Q''\in\add Q$ factors through $F$. This shows that the next iteration 
admits an analogous form 
$$\mu_i^2(\La)\cong \End_{\cK^b_\La}(X_2\oplus Q),$$ 
where $X_2\oplus Q$ is tilting and $X_2$ is a complex of the form 
$$C(F)=(\xymatrix@C=0.5cm{P_i \ar[r]^{f^1} & P^{(1)} \ar[r]^{f^2} & P^{(2)}})$$ 
concentrated in degrees $-2,-1$ and $0$. Repeating the above arguments inductively, for $T=X_k\oplus Q$, $k\geqslant 2$, 
we obtain a sequence of tilting complexes $\bP^{(k)}\oplus Q$ in $\cK^b_{\La}$, $k\geqslant 0$, with $\bP^{(k)}$ are 
of the form 
$$\bP^{(k)}=(\xymatrix@C=0.5cm{P_i \ar[r]^{f^1} & P^{(1)} \ar[r]^{f^2} & \cdots \ar[r]^{f^k} & P^{(k)}})$$ 
satisfying the following conditions 
\begin{itemize} 
\item $\bP^{(0)}=(P_i)$ is concentrated in degree $0$, 
\item $\bP^{(k)}$ is concentrated in degrees $-k,\dots,-1$ and $0$, for $k\geqslant 1$, 
\item for every $k\geqslant 1$, the map $f^{(k)}:=(0,\dots,f^k):\bP^{(k-1)}\to P^{(k)}$ (given in degree $0$) is a 
minimal left $\add Q$-approximation of $\bP^{(k-1)}$ in $\cK^b_{\La}$, and 
$$\mu_i^k(\La)\cong \End_{\cK^b_{\La}}(\bP^{(k)}\oplus Q)$$ 
\end{itemize} \medskip 

Note also that for all $k\geqslant 0$, the next tilting complex $\bP^{(k+1)}$ is the cone $C(f^{(k)})$ 
of a left approximation of the previous one. In particular, we obtain simple description of all silting 
complexes reachable from $\La$ via iterated mutation with respect to $P_i$, and these are exactly the 
complexes of the form 
$$\mu_{P_i}^k(P_i\oplus Q)=\bP^{(k)}\oplus Q,$$ 
of theoretically arbitrary length. We will see, that in some cases, periodicity of vertex implies periodicity 
of the complexes $\bP^{(k)}$, i.e. they are subcomplexes of an infinite periodic complex (see Remark \ref{rem:periodicapprox} 
for details). \medskip 

Note that all the above arguments work in case $\La$ is symmetric (weakly), since then all silting complexes 
are tilting (see \cite[Example 2.8]{Ai} or \cite[A.4]{A1}), and the mutation of a silting complex is again silting. So in our setup, complex 
$\La$ and its mutations $\mu_{P_i}^k(\La)$ are always tilting, and Theorem \ref{thm:} applies. \smallskip  

Summing up, we can realize iterated mutations at vertex $i$ as an endomorphism algebras of tilting 
complexes in $\cK^b_{\La}$ whose unique summand not concentrated in degree $0$ is getting larger with 
$k$ increasing. Moreover, the maps $f^k$ are constructed inductively as minimal left approximations of 
previous complexes (equivalently, projective modules over algebras). Motivated by this observation, 
we propose the following definition. 

\begin{defi}\label{cores} \normalfont Consider a complex $\bP\in\cK^b_\La$ of the form 
$$\bP: \qquad 
\xymatrix@C=0.5cm{\underline{P_i} \ar[r]^{f^1} & P^{(1)}_i \ar[r]^{f^2} & \cdots \ar[r]^{f^m} & P^{(m)}_i & }$$ 
which is exact in degrees $1,\dots,m-1$. Denote by $\bP^{(k)}$ the 
complex obtained from the left shift $\bP[k]$ by setting positive degrees to zero. Then $\bP$ is 
called the {\it (left) $\add Q$-resolution of $P_i$} of length $m$, provided that for every $k\in\{1,\dots,m\}$ the 
morphism $$f^{(k)}=(0,\dots,f^k):\bP^{(k-1)}\to P^{(k)}_i$$ 
is a left $\add Q$-approximation of $\bP^{(k-1)}$ in $\cK^b_\La$. \end{defi} \medskip 

We have seen that for any $i$, one can construct a left $\add Q$-resolution of $P_i$, and that it controls 
iterated mutations. In case $\La$ is symmetric, notions of tilting and silting complex coincide, so we could start from 
an arbitrary basic silting complex $T=X\oplus Q$, such that $Q$ is stalk and $X$ indecomposable concentrated in 
$\leqslant 0$ degrees (then both $T$ and $\mu_X(T)=Y\oplus Q$ are always tilting). It is easy to see that such a 
resolution is unique (up to isomorphism of complexes) if we restricted to minimal approximations, i.e. those, for 
which the codomain has a minimal number of indecomposable direct summands. Note that in particular, using Theorem 
\ref{thm:} above, we have constructed a (left) minimal $\add Q$-resolution of $P_i$, since all complexes we dealt 
with were basic. Moreover, observe that the sequence $\bP$ is indeed exact (in degrees $1,\dots,m-1$), because 
at each step $k\in\{1,\dots,m\}$, any morphism $(0,\dots,g):\bP^{(k-1)}\to Q'$ satisfies $gf^{k-1}=0$, so $\ima(f^{k-1}) 
\subset \ker g$, and we get equality if $g$ is a left $\add Q$-approximation of $\bP^{(k-1)}$ in $\cK^b_\La$.   

\bigskip 

\section{Proof of Theorem A}\label{sec:4} 

In this section we prove the first main result of this paper, Theorem A. Fix a (weakly) symmetric algebra $\La$ 
and a vertex $i$ of $Q=Q_\La$ such that $S_i$ is a periodic module of period $d$. In particular, we have 
an exact sequence in $\mod\La$ of the form 
$$\xymatrix{0 \ar[r]^{} & S_i \ar[r]^{d^0} & P_i \ar[r]^{d^1} & P^1_i \ar[r]^{d^2} & \dots \ar[r]^{d^{d-1}}  & 
P^{d-1}=P_i \ar[r] & S_i\ar[r] & 0 }$$ 
with $Im(d^k)$ is isomorphic to $\Omega_\La^{-k}(S_i)$. We additionally assume that all terms $P_i^k$ are 
in $\add Q$, where $\La=P_i\oplus Q$. \medskip 

We shall prove that $\mu_i^{d-2}(\La)\cong \La$. First, observe that the projective-injective coresolution 
of $S_i$ gives rise to the following complex 
$$\bP=\xymatrix@C=1.2cm{P_i \ar[r]^{d^1} & P^{1}_i \ar[r]^{d^2} & \cdots \ar[r]^{d^{d-2}} & P^{d-2}_i,}$$ 
concentrated in degrees $0,1,\dots,d-2$. We claim that $\bP$ is a left $\add Q$-resolution of $P_i$ (of length $d-2$), 
because all $P_i^k$ are in $\add Q$. Indeed, every map $(0,\dots,h):\bP^{(k-1)}\to Q'$, $Q'\in\proj\La$, $k\geqslant 1$, 
is determined by its degree zero part $h:P_i^{k-1}\to Q'$ satisfying $h d^{k-1}=0$. It follows that such a morphism 
$h$ factorizes as $h=h'\pi$, where $\pi:P_i^{k-1}\to \cok(d^{k-1})\simeq\Omega^{k-1}(S_i)$ is the cannonical 
projection. Moreover, $d^k=u\pi$, where 
$u:\cok(d^{k-1})\to P_i^k$ is an 
injective envelope. Now, since $u$ is a monomorphism and $Q'$ is injective, we obtain the following commutative 
diagram  in $\mod\La$ $$\xymatrix{Q' & \\ \cok(d^{k-1}) \ar[u]^{h'} \ar[r]^(.7){u} & P_i^k \ar[lu]_{h''}}$$ 
and consequently, we get that $h=h'\pi= h'' u\pi = h''d^k$ factirizes through $d^k$. As a result, the induced map 
$(0,\dots,d^k):\bP^{(k-1)}\to P_i^k$ is a left $\add Q$-approximation of $\bP^{(k-1)}$ in $\cK^b_\La$, and the 
above sequence is a left $\add Q$-resolution of $P_i$, in the sense of Definition \ref{cores}. \medskip  

Therefore, we conclude from the arguments presented in Section \ref{sec:3} that for any $k\geqslant 1$ 
there is an isomorphism $\mu_i^k(\La) \cong \End_{\cK^b_\La}(\bP^{(k)}\oplus Q)$. In particular, we get 
$$\mu_i^{d-2}(\La)\cong \End_{\cK^b_\La}(T),$$ 
where $T=\mu_{P_i}^{d-2}(\La)=\bP^{(d-2)}\oplus Q$ and $\bP^{(d-2)}$ is a complex in $\cK^b_\La$ of 
the form 
$$(\xymatrix@C=1.2cm{P_i \ar[r]^{d^1} & P^{1}_i \ar[r]^{d^2} & \cdots \ar[r]^{d^{d-2}} & P^{d-2}_i})$$ 
concentrated in degrees $0,-1,\dots,-(d-2)$. Denote by $A$ the endomorphism algebra $A=\End_{\cK^b_\La}(T)$, 
which means that $A\cong \mu_i^{d-2}(\La)$, and let $T=T_1\oplus\dots\oplus T_n$ with $T_i=\bP^{(d-2)}$ and 
$T_k=P_k$, for $k\neq i$. \medskip 

We claim first that $\La$ and $A$ are socle equivalent. Recall that $A=\tilde{P}_1\oplus\dots\oplus\tilde{P}_n$, 
where $\tilde{P}_k=\Hom_{\cK^b_\La}(T,T_k)$, for $k\in\{1,\dots,n\}$, form a complete set of indecomposable 
projective modules in $\mod A$. Obviously, we have an isomorphism $\Lambda\cong\End_{\cK^b_\La}(\La)$, hence 
it is sufficient to construct an isomorphism $$\Phi:B\to A$$ 
(modulo socle), where $B=\End_{\cK^b}(\La)$; in fact, we will see that only 
$\tilde{P}_i/\soc(\tilde{P_i})\simeq P_i/\soc(P_i)$, whereas for other $k\neq i$, there is a stronger 
isomorphism $\tilde{P}_k\simeq P_k$ (preserving multiplication). \medskip 
 
Observe that elements of $B$ can be identified with matrices of morphisms $f:P_k\to P_l$, $k,l\in\{1,\dots,n\}$, 
between the indecomposable direct summands of $\La=P_i\oplus Q$. Similarly, elements of $A$ can be viewed 
as matrices of morphisms $f:T_k\to T_l$, $k,l\in\{1,\dots,n\}$, between the indecomposable direct summands 
of $T=T_i\oplus Q$. Defining $\Phi$ is then reduced to a map sending homomorphisms $P_k\to P_l$ (in $\proj\La$) 
to morphisms $T_k\to T_l$ in $\cK^b_\La$. \medskip 

Now, let $f\in B$ be an arbitrary element of the form $f:P_k\to P_l$, $k,l\in\{1,\dots,n\}$. It is clear 
that $\Phi$ acts as identity: $\Phi(f)=f:T_k\to T_l$, if $k,l\neq i$. If $k=i$ but $l\neq i$, then 
$f:P_i\to P_l$ can be composed with $d_+:=d^{d-1}:P_i^{d-2}\to P_i$ yielding an induced morphism 
$$\Phi(f)=(0,\dots,fd_+):T_i\to T_l.$$ 
(note that morphisms $T_i\to T_l$ in $\cK^b_\La$ can be identified with morphisms of complexes, since 
the homotopy class of the zero morphism $(0,0,\dots,0)$ is a singleton). \smallskip 

On the other hand, if $k\neq i$ and $l=i$, then $f:P_k\to P_i$ is in the radical $\rad_\La$, so $\ima(f)\subseteq 
\rad P_i$, and hence, we get the following commutative diagram in $\mod\La$ 
$$\xymatrix{ & P_k \ar[d]^{\bar{f}} \ar[ld]_{g} \\ P_i^{d-2}\ar[r]^{\bar{d}} & \rad P_i }$$
where $\bar{f}$ and $\bar{d}$ are induced, respectively, from $f$ and $d_+$ (in fact, $\bar{d}$ is a projective 
cover of $\rad P_i$). One can see that if $g':P_k\to P_i^{d-2}$ is another homomorphism making the above diagram 
commutative, i.e. $\bar{d}g'=\bar{f}$, then $d^+(g-g')=0$, so $g-g':P_k\to P_i^{d-2}$ factorizes through 
$\ker(d_+)=\ima(d^{d-2})$, and therefore, we get a factorization $g-g'=d^{d-2}s$, for some $s:P_k\to P_i^{d-3}$. 
This shows that the induced morphism $(0,\dots,g-g'):T_k\to T_i$ of complexes is homotopic to the zero 
morphism. Consequently, we may set 
$$\Phi(f)=[(0,\dots,g)]_\sim:T_k\to T_i,$$ 
and $\Phi(f)$ does not depend on the choice of map $g$. \medskip 

Summing up, we have defined $\Phi(f):T_k\to T_l$, for any $f:P_k\to P_l$, if $(k,l)\neq (i,i)$. 
It is easy to show that the induced map $\Phi_{kl}:\Hom_\La(P_k,P_l)\to\Hom_{\cK^b_\La}(T_k,T_l)$ is a 
$K$-linear epimorphism, for any $(k,l)\neq (i,i)$, and obviously, $\Phi_{k,l}$ is isomorphism for $k,l\neq i$. If 
$\Phi(f)=0$ for $f:P_k\to P_i$, $k\neq i$, then $f=d_+g$ and $(0,\dots,0,g):T_k\to T_i$ is homotopic to zero. 
It means that $g$ factorizes as $g=d^{d-2}s$, for some $s:P_k\to P_i^{d-3}$, and we get $f=d_+g=d_+d^{d-2}s=0$. 
If $\Phi(f)=0$ for $f:P_i\to P_l$, $l\neq i$, then $(0,0,fd_+):T_i\to T_l$ is homotopic to zero, or equivalently, 
we have $fd_+=0$. Consequently, $f$ factorizes as $f=sd_0$, for some $s:S_i\to P_l$, where $d_0:P_i\to S_i$ is the 
cannonical projection onto cokernel of $d_+=d^{d-1}$. It follows that $s=0$, since 
otherwise $s$ is a non-zero monomorphism and $P_l$ has a simple sumbodule $N\simeq S_i$, and then 
$N=\soc(P_l)\simeq S_l$, a contradiction. Hence, also $f=sd_0=0$, and therefore, the map $\Phi_{kl}$ is a monomorphism, 
so an isomorphism, for all $k,l$ (for which it is defined). This extends to an isomorphisms 
$P_k\simeq \Hom_\La(\La,P_k)\to^{\Phi} \Hom_{\cK^b_\La}(T,T_k)=\tilde{P}_k$, for any $k\neq i$. 
Moreover, it is straighforward to check that $\Phi$ preserves compositions, whenever it is defined (on 
the maps being composed, and on the composition as well). \medskip    

Finally, consider $k=l=i$, that is, $f$ is a homomorphism of the form $f:P_i\to P_i$. We want to associate 
a morphism $\Phi(f):T_i\to T_i$, which is a homotopy class of a morphism of complexes $T_i\to T_i$. It 
is natural to consider a resolution of $f$ with respect to the projective resolution of $S_i$, and this 
gives the following commutative diagram 
$$\xymatrix{P_i\ar[r]^{\omega}\ar[d] & P_i\ar[r]^{d^1} \ar[d]^{g^0} & P_i^1\ar[r]^{d^2} \ar[d]^{g^1}& \dots \ar[r]^{d^{d-2}} 
 & P_i^{d-2} \ar[d]^{g^{d-2}}\ar[r]^{d_+} & P_i \ar[d]^{f} \\ 
P_i\ar[r]^{\omega} & P_i\ar[r]^{d^1} & P_i^1\ar[r]^{d^2} & \dots \ar[r]^{d^{d-2}}& P_i^{d-2} \ar[r]^{d_+} & P_i }$$ 
whose rows are taken from the projective resolution of $S_i$. Note that $\omega$ is the composition 
$\omega=d^0d_0$ of the projective cover $d_0:P_i\to\soc(P_i)\simeq S_i$ and the socle inclusion 
$d^0:\ker(d^1)=\soc(P_i)\to P_i$. In fact, $\omega:P_i\to P_i$ is determined by an element 
$\omega\in e_i\La e_i$ generating the socle $\soc(P_i)=K\omega$ (even a cycle in $Q$, but it is not relevant), 
and then $d_0:P_i\to\soc(P_i)$ is the restriction of $\omega$ to the image (note that in particular, we 
have $d_0(e_i)=\omega$ and $\ker(d_0)=\rad P_i$). \smallskip  
 
One would expect that the correct morphism $\Phi(f)$ is the homotopy class of the restricted map 
$(g^0,g^1,\dots,g^{d-2}):T_i\to T_i$, but this does not behave well under the homotopy, namely, the 
map $g=(g^0,g^1,\dots,g^{d-2})$ is not necessarily homotopic to zero, if $f=0$ (or equivalently, 
$[(g^0,\dots,g^{d-2})]_\sim$ may depend on the choice of the resolution of $f$). Nevertheless, this 
is solved by passing to socle quotient. \smallskip 

Observe that $\omega$ gives the socle generator $\omega:P_i\to P_i$ of $\Hom_\La(\La,P_i)\simeq P_i$, and additionally, 
the class $[(\omega,0,\dots,0)]_\sim:T_i\to T_i$ is a generator of the socle $\soc(\tilde{P}_i)$ of $\tilde{P}_i$. 
\smallskip    

Now, if $f=0$ then there is a morphism $G=(\dots,g^0,g^1,\dots,g^{d-2},0)$ of projective resolutions, 
which is homotopic to zero, by standard homological algebra. Moreover, we have $d_+g^{d-2}=fd_+=0$, 
thus $\ima(g^{d-2})\subset\ker(d_+)=\ima(d^{d-2})$, and hence $g^{d-2}$ factorizes as 
$g^{d-2}=d^{d-2}s^{d-2}$. We may continue obtainig homomorphisms $s^k:P_i^k\to P_i^{k-1}$, for 
$k\in\{1,\dots,d-2\}$, $P_i^0:=P_i$, and $s^0:P_i\to P_i$ such that 
$$g^0=d^0_0s^0 + s^{1}d^{1}, \ g^{d-2}=d^{d-2}s^{d-2} \mbox{ and }g^k=d^ks^k + s^{k+1}d^{k+1}, \ 
\mbox{for }k\in\{1,\dots,d-3\}.$$  
Note finally that a morphism of complexes $(h^0,\dots,h^{d-2}):T_i\to T_i$ is homotopic to zero if and 
only if there are homomorphisms $s^k:P_i^k\to P_i^{k-1}$, $k\in\{1,\dots,d-2\}$ such that $h^0=s^1d^1$, 
$h^{d-2}=d^{d-2}s^{d-2}$, and $h^k=d^ks^k + s^{k+1}d^{k+1}$, for all $k\in\{1,\dots,d-3\}$. It follows that 
$g\sim 0$, if $d^0s^0=0$, which is the case in particular, when $s^0\in J$, since $d^0_0=\omega$ generates 
the socle of $P_i$. Assume that $s^0\notin e_i Je_i$, and hence $s^0$ is identified with a 
left multiplication by a unit $\lambda\in K$. Then we have the following morphism 
$$(g^0-\lambda\omega,g^1,\dots,g^{d-2}),$$ 
which is homotopic to zero, and we deduce that $g=(g^0,g^1,\dots,g^{d-2})\sim \lambda(\omega,0,\dots,0)$, 
so $[g]_\sim:T_i\to T_i$ is in the socle of $\tilde{P}_i=\Hom_{\cK^b_\La}(T,T_i)$. As a result, we 
conclude that the coset $[g]_\sim+\soc(\tilde{P}_i)$ is zero in $\tilde{P}/\soc(\tilde{P}_i)$. One 
can also see that for any $f\in\soc(P_i)$, still $d_+g^{d-2}=fd_+=0$, so we can construct analogous 
homotopy as in case $f=0$, and get that then also $[g]_\sim:T_i\to T_i$ belongs to the socle of 
$\tilde{P}_i$. This shows that we have an induced homomorphism    
$$\tilde{\Phi}_{ii}:\Hom_\La(P_i,P_i)/\soc(P_i)\to \Hom_{\cK^b_\La}(T_i,T_i)/\soc(\tilde{P_i})$$ 
which sends cosets of maps $f:P_i\to P_i$ to cosets of morphisms $[g]_\sim$, where 
$g=(g^0,\dots,g^{d-2}):T_i\to T_i$ is taken from arbitrary resolution of $f$ with respect to 
projective resolutions (as above). It is easy to prove that $\tilde{\Phi}_{i,i}$ is also an epimorphism. 
If $\tilde{\Phi}(\bar{f})=0$, then $[g]_\sim$ belongs to the socle of $\tilde{P}_i$, and hence 
$g\sim \lambda(\omega,0,\dots,0)$, for some $\lambda\in K$. It follows that $fd_+=d_+g^{d-2}=d_+d^{d-2}s^{d-2}=0$, 
thus $f$ must belong to the socle, that is $\bar{f}=0$, so we conclude that $\tilde{\Phi}_{ii}$ is 
a monomorphism, and hence an isomorphism. \medskip 

Eventually, we are now able to define the isomorphism 
$$\tilde{\Phi}:B/\soc(P_i)\to A/\soc(\tilde{P}_i).$$ 
Clearly, any element $f\in B/\soc(P_i)$ is represented by a matrix $f=[f_{kl}]$ of morphisms 
$f_{kl}:P_k\to P_l$, for $(k,l)\neq i$, and a coset $f_{ii}=\overline{h_{ii}}$ of a homomomorphism 
$h_{ii}:P_i\to P_i$. Then $\Phi(f)$ is represented by a matrix 
$\tilde{\Phi}(f)=[g_{kl}]$, where $g_{kl}=\Phi_{kl}(f_{kl})$, for $(k,l)\neq (i,i)$ and 
$g_{ii}=\tilde{\Phi}(f_{ii})$. It is clear that $\tilde{\Phi}(f\circ f')=\tilde{\Phi}(f)\circ\tilde{\Phi}(f')$, 
for all $f,f'\in B/\soc(P_i)$, and consequently, we proved that algebras $\mu_i^{d-2}(\La)\cong A$ and 
$\La$ are socle equivalent. This completes the proof of the Main Theorem. \medskip 

However, we can obtain only socle equivalence $\mu_i^{d-2}(\La)\sim\La$ in general, we would like to point out 
that it seems very close to an isomorphism. First, it is an isomorphism in the characteristic $\neq 2$. Moreover, 
we have induced isomorphisms $P_j\simeq \tilde{P}_j$, for $j\neq i$, hence we obtain a bit 
stronger socle equivalence, as stated in the following corollary. 

\begin{cor}\label{strongSE} The quotient algebras $\mu_i^{d-2}(\La)/\soc(\tilde{P}_i)$ and $\La/\soc(P_i)$ are 
isomorphic. \end{cor}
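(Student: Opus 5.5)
The corollary is essentially a repackaging of the isomorphism $\tilde{\Phi}:B/\soc(P_i)\to A/\soc(\tilde{P}_i)$ constructed in the proof of the Main Theorem. I would begin by identifying the two quotient algebras precisely. Here $\soc(P_i)$ means the two-sided ideal $K\omega$ of $B\cong\La$, where $\omega\in e_i\La e_i$ generates the socle of $P_i$. Similarly, $\soc(\tilde{P}_i)$ is the ideal $K[(\omega,0,\dots,0)]_\sim$ of $A=\End_{\cK^b_\La}(T)\cong\mu_i^{d-2}(\La)$. Both are ideals: since $\La$ is weakly symmetric, $\omega$ is annihilated on both sides by $J$ and by the idempotents $e_k$, $k\neq i$. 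The same holds for its image in $A$, because $A$ is again weakly symmetric by Theorem \ref{der:symm}, with $\tilde{P}_i$ having simple socle at $i$. So both quotients are algebras, obtained by killing only the one-dimensional socle at vertex $i$, rather than all of $\soc(\La)$.

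Next I would assemble $\tilde{\Phi}$ blockwise. On blocks $(k,l)\neq(i,i)$ it is given by the maps $\Phi_{kl}$, which the proof showed are $K$-linear isomorphisms $\Hom_\La(P_k,P_l)\to\Hom_{\cK^b_\La}(T_k,T_l)$. On the block $(i,i)$ it is given by $\tilde{\Phi}_{ii}$, an isomorphism $\Hom_\La(P_i,P_i)/K\omega\to\Hom_{\cK^b_\La}(T_i,T_i)/K[(\omega,0,\dots,0)]_\sim$. Since $B/\soc(P_i)=\bigoplus_{k,l}\Hom(P_k,P_l)$ with only the $(i,i)$ summand reduced, and likewise for $A$, these blocks give a bijective $K$-linear map. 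What remains is multiplicativity, which the proof only asserted.

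For multiplicativity I would check the composition types one by one. Compositions avoiding vertex $i$ are trivial. For a composite $P_k\to P_i\to P_l$ with $k,l\neq i$, one must show $(fd_+)g=f\bar f$ where $d_+g=\bar f$, and this holds by construction. Composites through $P_i\to P_i$ require lifting the endomorphism to the resolution; the key point is that the chain maps $g^\bullet$ lifting $f$ compose to a lift of the composite. The main obstacle is compositions landing in the $(i,i)$ block, such as $P_i\to P_l\to P_i$. There one must show that the lift of $h f$ agrees, modulo homotopy and $K[(\omega,0,\dots,0)]$, with $(0,\dots,g)\circ(0,\dots,fd_+)$. I would handle this via uniqueness of lifts up to homotopy: both are chain maps over the same $hf$ on the last term, and their difference is a lift of zero. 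By the argument in the proof, such a difference is homotopic to a multiple of $(\omega,0,\dots,0)$, so it vanishes in the quotient. This gives the required isomorphism $\mu_i^{d-2}(\La)/\soc(\tilde{P}_i)\cong\La/\soc(P_i)$.
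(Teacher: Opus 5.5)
Your proposal is correct and follows the paper's route: the corollary is the observation that the map $\tilde{\Phi}:B/\soc(P_i)\to A/\soc(\tilde{P}_i)$ built in the proof of the Main Theorem is an algebra isomorphism. It uses the isomorphisms $\Phi_{kl}$ off the $(i,i)$ block and $\tilde{\Phi}_{ii}$ on it, and only the socle of the $i$-th projective is factored out. Your explicit multiplicativity check fills in what the paper only asserts as ``clear''. In particular, for $P_i\to P_l\to P_i$ you note that $(0,\dots,0,gfd_+)$ is itself an admissible lift of $hf$, so the well-definedness of $\tilde{\Phi}_{ii}$ applies.
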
 \smallskip 

Now, note the following immediate consequence of the Main Theorem, which provides the proof of Corollary 1 (see the 
introduction).  

\begin{cor}\label{cor:sp4} Suppose $\La$ is symmetric and $i$ is a periodic vertex of period 
$d=4$. If there is no loop at $i$ in $Q_\La$, then $\mu_i^2(\Lambda)\sim\Lambda$. \end{cor}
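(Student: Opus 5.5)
The plan is to reduce Corollary \ref{cor:sp4} directly to the Main Theorem with $d=4$. By the Main Theorem, it suffices to check that in the periodic projective-injective coresolution
$$0\to S_i\to P_i\xrightarrow{d^1} P_i^1\xrightarrow{d^2} P_i^2\xrightarrow{d^3} P_i\to S_i\to 0,$$
both middle terms $P_i^1$ and $P_i^2$ lie in $\add Q$, where $\La=P_i\oplus Q$. Equivalently, $P_i$ must not occur as a direct summand of $P_i^1$ or of $P_i^2$. Once this is verified, the Main Theorem gives $\mu_i^{d-2}(\La)=\mu_i^2(\La)\sim\La$.

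First I identify the middle terms. Since $\La$ is symmetric, $P_i^2$ is the projective cover of $\rad P_i$. Its indecomposable summands are $P_j$, one for each arrow $i\to j$ in $Q_\La$, because $\rad P_i/\rad^2P_i\simeq\bigoplus_{\alpha:i\to j}S_j$. So $P_i$ appears in $P_i^2$ exactly when there is a loop at $i$, and by assumption there is none. Dually, $P_i^1$ is the injective envelope of $\Omega^{-1}(S_i)=P_i/\soc P_i$. Its socle corresponds to the arrows ending at $i$. By the symmetric description of the minimal resolution, the multiplicity of $P_j$ in $P_i^1$ equals $\dim\operatorname{Ext}^2(S_i,S_j)$ read backwards, which is $\dim\operatorname{Ext}^1(S_j,S_i)$ after using $\Omega^{-1}$ and periodicity.

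To make this airtight without appealing to relation counts, I use period $4$. We have $\Omega^{-1}(S_i)\simeq\Omega^{3}(S_i)$. The top of $\Omega^2(S_i)$ is given by $P_i^1$, which equals the first term of the minimal projective resolution after relabelling: $P_i^{d-1-k}$ corresponds to $P_k$. So $P_i^1$ is the projective cover of $\Omega^2(S_i)$, and its summands are counted by $\operatorname{Ext}^2(S_i,S_j)$. Alternatively, $P_i^1$ is the injective envelope of $P_i/\soc P_i$, whose socle is $\soc(P_i/\soc P_i)$. This socle is spanned by the classes of paths $p\alpha$ with $\alpha$ an arrow ending at $i$, and its simple summands $S_j$ correspond to arrows $j\to i$. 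Thus $P_i\mid P_i^1$ iff $\operatorname{Ext}^1(S_i,S_i)\neq0$, i.e. iff there is a loop at $i$.

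The main obstacle is the identification $\soc(P_i/\soc P_i)\simeq\bigoplus_{\beta:j\to i}S_j$. I plan to obtain it by applying the duality $D$ to the left module $\La e_i$. The symmetric form identifies $P_i/\soc P_i$ with $D(\rad(\La e_i))$. The socle of this dual is the dual of the top of $\rad \La e_i$, namely $\rad\La e_i/\rad^2\La e_i$, which is indexed by the arrows ending at $i$. With no loop at $i$, neither middle term contains $P_i$, the hypothesis of the Main Theorem holds, and the corollary follows.
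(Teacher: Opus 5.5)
Your proposal is correct and follows the paper's route: you apply the Main Theorem with $d=4$ after checking that neither middle term contains $P_i$. The term $P_i^+$ (the projective cover of $\rad P_i$) and the term $P_i^-$ (the injective envelope of $P_i/\soc P_i\cong D(\rad \La e_i)$) have summands $P_j$ indexed by the arrows starting at $i$ and ending at $i$, respectively. Your duality argument supplies the justification that the paper only alludes to, and it makes your earlier heuristic remarks (such as the socle being ``spanned by paths $p\alpha$'') unnecessary.
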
  

\begin{proof} If $S_i$ is periodic of period $d=4$, then the exact sequence associated to $S_i$ has the 
form 
$$0\to S_i\to P_i\to P_i^-\to P_i^+\to P_i\to S_i \to 0,$$ 
where modules $P_i^-,P_i^+$ are in $\add Q$ if and only if there is no loop at $i$ in $Q_\La$ (see 
Section \ref{sec:2}). Then the claim follows from the Main Theorem. \end{proof} \medskip 

We hope to strenghten the above result to get an isomorphism in case $i$ admits no loop. This is motivated 
by the property of weighted surface algebras, whose socle equivalence class is formed by the socle deformed 
weighted surface algebras \cite[see Theorem 1.2]{BEHSY}, and non-isomorphic algebras are allowed only if we 
deform socles of projectives $P_i$ such that $i$ admits a border loop (see also Section \ref{sec:cor4}). 
Hence, for vertices without loop, we should expect an isomorphism, not a socle equivalence, but at the 
moment it is not clear how to prove it in general. \medskip 

Note also the following corollary. 

\begin{cor} Let $\La=KQ/I$ be a symmetric algebra of period $d=3$. Then for any vertex $i\in Q_0$ without loop, 
we have $\mu_i(\La)\sim\La$. In particular, if $Q_\La$ has no loops, then $\mu_i(\La)\cong\La$, for all vertices $i$.   
\end{cor}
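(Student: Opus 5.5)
The plan is to apply the Main Theorem with $d=3$, after checking its hypothesis. For $\La$ symmetric periodic of period $3$, every simple $S_i$ is periodic with period dividing $3$. Period $1$ is impossible: $\Omega(S_i)\simeq S_i$ would force $\rad P_i\simeq S_i$, so $P_i$ has length $2$; combined with $\soc P_i = S_i$, this gives a loop at $i$ with $P_i$ uniserial $S_i/S_i$. Hence $i$ is a vertex without loop. Then $S_i$ has period exactly $3$ (I will sanity-check this against the period of $\La$). The associated exact sequence therefore reads
$$0\to S_i\to P_i\xrightarrow{d^1} P_i^1\xrightarrow{d^2} P_i\to S_i\to 0,$$
with the single middle term $P_i^1$.

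The key step is to show $P_i^1\in\add Q$. Here $P_i^1$ is the projective cover of $\Omega(S_i)=\rad P_i$, equivalently the injective envelope of $\Omega^{-1}(S_i)=P_i/\soc P_i$, since the resolution is periodic. Its indecomposable summands are therefore the $P_j$ with $S_j$ a summand of $\rad P_i/\rad^2 P_i$, one for each arrow $i\to j$ in $Q_\La$. Because there is no loop at $i$, $P_i$ does not occur, so $P_i^1\in\add Q$.

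The Main Theorem with $d-2=1$ then gives $\mu_i(\La)\sim\La$. For the second assertion, I will upgrade socle equivalence to isomorphism when $Q_\La$ has no loops. I would first try Corollary \ref{strongSE}: it identifies $\mu_i(\La)/\soc(\tilde P_i)$ with $\La/\soc(P_i)$, and the remaining freedom is a deformation of a relation in $e_i\La e_i$ by $\lambda\omega$. With no loop at $i$, the socle element $\omega$ is a cycle through other vertices, and I expect to absorb $\lambda\omega$ by rescaling an arrow ending at $i$, e.g. $\alpha\mapsto\alpha+c\,\alpha\omega'$, or by changing $\omega$ within its class.

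This absorption step is where I expect the real difficulty, and the argument may only go through in characteristic $2$, or for twisted $3$-periodic algebras as the introduction indicates. If the general absorption fails, I will fall back on stating the isomorphism under that restriction, and in every case on $\mu_i(\La)\sim\La$.
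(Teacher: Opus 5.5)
Your argument for $\mu_i(\La)\sim\La$ follows the paper's proof. The period of $S_i$ divides $3$, and period $1$ would force $\rad P_i\simeq S_i$, hence a loop at $i$; so $S_i$ is $3$-periodic. The single middle term $P_i^1$ lies in $\add Q$, and the Main Theorem with $d-2=1$ applies. You also make the check $P_i^1\in\add Q$ explicit, which the paper leaves implicit.

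The gap is the ``in particular'' clause $\mu_i(\La)\cong\La$. Corollary \ref{strongSE} still only identifies the quotients by $\soc(\tilde P_i)$ and $\soc(P_i)$, and your absorption step is not an argument:
\begin{itemize}
\item If $\omega'$ is the socle generator at $i$, then for an arrow $\alpha$ from $j\neq i$ to $i$ you have $\alpha\omega'\in e_j\soc(\La)e_i=0$, so the substitution does nothing.
\item For any other cycle you would have to control its effect on every relation through $\alpha$, which you do not attempt.
\end{itemize}
More fundamentally, no change of generators that ignores symmetry can succeed. Take the trivial extension $T$ of the Kronecker algebra: arrows $\alpha_1,\alpha_2\colon 1\to 2$ and $\beta_1,\beta_2\colon 2\to 1$, relations $\alpha_1\beta_1=\alpha_2\beta_2$ and $\beta_1\alpha_1=\beta_2\alpha_2$, all other paths of length two zero. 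Replace $\alpha_1\beta_2=0$ by $\alpha_1\beta_2=\lambda\,\alpha_1\beta_1$ with $\lambda\neq 0$. The new algebra is loop-free, weakly symmetric and socle equivalent to $T$. It is not symmetric: the commutator $\alpha_1\beta_2-\beta_2\alpha_1=\lambda\,\alpha_1\beta_1$ is a nonzero socle element, and a symmetrizing form would have to annihilate it, contradicting nondegeneracy. So it is not isomorphic to $T$. Any proof of the isomorphism must therefore use that $\mu_i(\La)$ is symmetric, and your sketch never does.

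Your fallbacks do not rescue the clause either. The characteristic heuristic points the wrong way. The paper's remark following the proof of the Main Theorem asserts isomorphism in characteristic $\neq 2$; the introduction is inconsistent on this point. Also, the familiar absorption of a socle term at a loop typically requires dividing by $2$. Restricting to twisted $3$-periodic algebras is not a restriction at all, since that hypothesis is weaker than $3$-periodicity, so it cannot give a stronger conclusion.

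In fairness, the paper does not close this gap either. Its proof ends with $\mu_i(\La)=\mu_i^{d-2}(\La)\sim\La$ and contains no passage from socle equivalence to isomorphism. Corollary~2 in the introduction only asserts that, for loop-free $Q$, the mutation class consists of algebras socle equivalent to $\La$. What your argument and the paper's actually establish is $\mu_i(\La)\sim\La$ for every vertex $i$ when $Q_\La$ has no loops. The stated $\cong$ needs an additional ingredient that neither supplies.
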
 

\begin{proof} For algebras of period $d=3$, every simple module $S_i$ is periodic of period dividing $3$. We claim 
that $S_i$ is $3$-periodic, if there is no loop at $i$. Indeed, we cannot have a 
simple module $S_i$ of period $1$, if $i$ admits a loop, because then $\rad P_i=\Omega(S_i)$ is simple $S_i$, so 
$P_i$ is a uniserial module with two composition factors $S_i$. But then, there are no non-zero homomorphisms $P_j\to P_i$, 
for $j\neq i$ (see \cite[Lemma I.11.3]{SkY}), and hence, $Q$ consists of one vertex $i$ (since it is connected) and at 
least one loop, a contradiction. As a result, all simple modules $S_i$ in $\mod\La$ at vertices $i$ without loop 
are of period $d=3$, and consequently, by the Main Theorem, we have $\mu_i(\La)=\mu_i^{d-2}(\La)\sim\La$. \end{proof} \medskip 

Note that the above proof of the Main Theorem uses the complex $\bP\in\cK^b_\La$ (induced from a projective 
resolution) to show that it is a (left) $\add Q$-resolution of $P_i$, if summands $P_i^k$ are in $\add Q$. If 
$i$ is arbitrary, we can start with any left $\add Q$-resolution 
$$\bP=\xymatrix@C=0.5cm{\underline{P_i} \ar[r]^{f^1} & P^{(1)}_i \ar[r]^{f^2} & \cdots \ar[r]^{f^m} & P^{(m)}_i},$$ 
of $P_i$, so that also $\mu_i^k(\La)\cong \End_{\cK^b_\La}(\bP^{(k)}\oplus Q)$, for all $k\in\{1,\dots,m\}$. 

Then we can repeat the arguments showing socle equivalence $\mu_i^m(\La)\sim\La$, if only there is a homomorphism 
$d_+:P_i^{(m)}\to P_i$  (in the proof of the Main Theorem, we had $m=d-2$) satisfying the following conditions 
$$\ima(f^{m})=\ker d_+, \ \cok(d_+)=P_i/\rad P_i.$$ \medskip 
Note that in definition of the map $\Theta_{ii}$, we need to use the commutative diagaram whose rows are taken 
from the $\add Q$-resolution (together with $d_+$), instead of the projective resolution. Moreover, we also used 
$\ker(d^1)=\soc(P_i)\simeq S_i$ to get an isomorphism $\tilde{\Theta}_{ii}$ between the socle quotients. In other 
words, the above proof implies the following general consequence. 

\begin{cor} \label{aaprox} Suppose that $\bP$ is a left $\add Q$-resolution 
$$\xymatrix{\underline{P_i} \ar[r]^{f^1} & P^{(1)}_i \ar[r]^{f^2} & \cdots \ar[r]^{f^m} & P^{(m)}_i}$$
such that $\ker(f^1)=\soc(P_i)$. If there is a map $d_+:P_i^{(m)}\to P_i$, such that $\ima(f^m)=\ker(d_+)$ and 
$\cok(d_+)=P_i/\rad P_i$, then $\mu_i^m(\La)$ and $\La$ are socle equivalent. \end{cor}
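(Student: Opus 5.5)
Since $\bP$ is a left $\add Q$-resolution of $P_i$, the arguments of Section \ref{sec:3}, via Theorem \ref{thm:} and Corollary \ref{cor:mutcone}, give $\mu_i^m(\La)\cong A:=\End_{\cK^b_\La}(T)$, where $T=\bP^{(m)}\oplus Q$ and $T_i=\bP^{(m)}$ is concentrated in degrees $-m,\dots,0$. I would then repeat the proof of the Main Theorem, replacing the projective-injective resolution of $S_i$ by the spliced sequence
$$\xymatrix{P_i\ar[r]^{\omega} & P_i\ar[r]^{f^1} & P_i^{(1)}\ar[r] & \cdots\ar[r]^{f^m} & P_i^{(m)}\ar[r]^{d_+} & P_i\ar[r] & S_i\ar[r] & 0}.$$
This sequence is exact. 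At $P_i^{(m)}$ this is the hypothesis $\ima(f^m)=\ker d_+$. At the interior terms it is exactness of $\bP$, which is part of Definition \ref{cores}. At the first $P_i$ it is $\ker f^1=\soc(P_i)=\ima\omega$. At the end it holds because $\cok d_+=S_i$.

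For $(k,l)\neq(i,i)$ I would define $\Phi_{kl}$ exactly as before. For $k,l\neq i$ take the identity. For $f:P_i\to P_l$ take $(0,\dots,fd_+)$. For $f:P_k\to P_i$ with $k\neq i$, lift $f$ through $\rad P_i=\ima d_+$ to $g:P_k\to P_i^{(m)}$ and take $[(0,\dots,g)]$. Independence of the lift and injectivity follow from $\ker d_+=\ima f^m$. For the second case, $fd_+=0$ forces $f$ to factor through $S_i$, and this vanishes because $\soc P_l\simeq S_l$ with $l\neq i$ (weak symmetry). Surjectivity of each $\Phi_{kl}$ comes from the degree-zero description of maps between a stalk complex and $T_i$. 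One has to watch homotopies here: a map $P_k\to T_i$ is a single degree-zero component $g$, and the null-homotopic ones are those of the form $f^m s$. Compatibility with composition is a routine check, as before.

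For $(i,i)$, lift $f:P_i\to P_i$ to a chain map $(g^0,\dots,g^m)$ on the spliced sequence. Such a lift exists by a comparison argument applied to the exact rows, read from right to left. At each step one needs the relevant map to be a left $\add Q$-approximation, which is where Definition \ref{cores} enters, or one uses injectivity of projectives over a self-injective algebra. At the leftmost step one uses $\ker f^1=\soc P_i$. I expect this to be the main obstacle. In the Main Theorem, lifting used the fact that the rows were projective resolutions, whereas here I would construct the lift from the left-approximation property step by step, pushing $f$ forward along $f^1,\dots,f^m$. Descending along $d_+$ should then be automatic, because $fd_+$ and $d_+g^m$ agree on cokernels. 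Next one shows that if $f\in\soc(P_i)$, then $fd_+=0$. This lets $g^m$ factor through $f^m$, and an inductive construction of a homotopy $s^k$ leaves only a term $\lambda(\omega,0,\dots,0)$, which lies in $\soc(\tilde P_i)$. Hence we get an induced map $\tilde\Phi_{ii}:\End(P_i)/\soc\to\End(T_i)/\soc$, which is bijective by the same argument as before. Assembling the matrix map then gives $\La/\soc(P_i)\cong A/\soc(\tilde P_i)$, so $\mu_i^m(\La)\sim\La$.
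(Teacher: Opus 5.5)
Your overall strategy is the same as the paper's: identify $\mu_i^m(\La)$ with $\End_{\cK^b_\La}(\bP^{(m)}\oplus Q)$, then rerun the proof of the Main Theorem on the spliced exact row $P_i\to P_i\to P_i^{(1)}\to\cdots\to P_i^{(m)}\to P_i\to S_i\to 0$. Your treatment of the components with $(k,l)\neq(i,i)$ and of the socle/homotopy step is fine.

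The step that fails is the construction of the $(i,i)$ lift that you actually commit to. You push $f$ forward from the left: take $g^0=f$ and extend along $f^1,\dots,f^m$ by the left-approximation property. This need not produce a chain map whose right-hand end is $f$. The reason is that the degree $-m$ component of a chain endomorphism of $T_i$ does not control the map it induces on $\cok(f^m)\simeq\rad P_i$. Here is a concrete failure. Suppose $f\notin\soc(P_i)$ factors through $\add Q$. Then $f=sf^1$ for some $s:P_i^{(1)}\to P_i$, and $(sf^1,f^1s,0,\dots,0)$ is a legitimate push-forward with $g^0=f$. However, it is null-homotopic, and the map $f'$ with $f'd_+=d_+g^m$ lies in $\soc(P_i)$. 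So ``descending along $d_+$'' is not automatic. Your next steps need exactly the relation $d_+g^m=fd_+$ that this construction does not provide: namely ``$f\in\soc(P_i)$ gives $fd_+=0$, so $g^m$ factors through $f^m$'', and the injectivity of $\tilde\Phi_{ii}$.

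The repair is the right-to-left comparison, which you mention first and then set aside. It is not an obstacle. It needs neither the approximation property nor injectivity, only projectivity of the terms and exactness of the spliced row, which you have already checked:
\begin{itemize}
\item Since $\ima(fd_+)=f(\rad P_i)\subseteq\rad P_i=\ima d_+$, lift $fd_+$ through $d_+$ to get $g^m$.
\item Then $d_+g^mf^m=fd_+f^m=0$, so $\ima(g^mf^m)\subseteq\ker d_+=\ima f^m$. Hence there is $g^{m-1}$ with $f^mg^{m-1}=g^mf^m$.
\item Continue in the same way using $\ker f^{k+1}=\ima f^k$.
\item At the end, $g^0\omega$ lands in $\ker f^1=\soc(P_i)=\ima\omega$.
\end{itemize}
This is what the paper means by using the commutative diagram whose rows are taken from the $\add Q$-resolution together with $d_+$. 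With this lift, the rest of your argument goes through as in the paper.
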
 \medskip 

Note that, having a map $d_+:P_i^{(m)}\to P_i$, it has $\cok(d_+)=P_i/\rad P_i$ if and only if 
$d_+$ is a right $\add Q$-approximation. Indeed, if $\cok(d_+)$ is $P_i/\rad P_i$ (equivalently, $\ima(d_+)=\rad P_i$), 
then every map $g:Q'\to P_i$ with $Q'\in\add Q$ is in $\rad_\La$, so $\ima(g)\subset\rad P_i=\ima(d_+)$, and 
we obtain the following commutative diagram  
$$\xymatrix{& Q'\ar[d]_{\bar{g}} \ar[ld]_{g'}\\ P_i^{(m)} \ar[r]^{\bar{d}_+} & \rad P_i,}$$ 
where $\bar{g}$ and $\bar{d}_+$ are maps induced from $g$ and $d_+$ ($Q'$ is projective and $\bar{d}_+$ is an 
epimorphism). This means that $g$ factorizes as $g=d_+g'$, and hence, $d_+$ is a right $\add Q$-approximation. 
Conversly, if $d_+$ is a right $\add Q$-approximation, then it must have $\ima(d_+)=\rad P_i$.  \medskip 

This leads naturally to a notion of $\add Q$-periodicity. Indeed, we call $P_i$ an $\add Q$-periodic module, 
if there exists a {\it periodic $\add Q$-resolution} of $P_i$, that is an exact sequence of the 
form $$(\xymatrix@C=0.5cm{\underline{P_i} 
\ar[r]^{f^1} & P^{(1)}_i \ar[r]^{f^2} & \cdots \ar[r]^{f^m} & P^{(m)}_i \ar[r]^{g}& P_i})$$ 
such that $(\xymatrix{P_i \ar[r]^{f^1} & \dots \ar[r]^{f^m} & P_i^{(m)}})$ is a left $\add Q$-resolution of $P_i$ 
(of length $m$), $\ker(d^1)=\soc(P_i)$ and $g:P_i^{(m)}\to P_i$ is a right $\add Q$-approximation. Then $m$ is 
also called the length of the periodic approximation. Dually, one can define the periodic right $\add Q$-resolution  
of $P_i$ (of length $m$), but this notion is not used in the paper. The smallest such $m$ is called the period 
of $P_i$. \medskip 

Summarizing this section, we have proved that $\mu_i^m(\La)$ is socle equivalent to $\La$ if the 
projective module $P_i$ is $\add Q$-periodic (of period $m$). This proves Corollary 3. As in case of the Main 
Theorem, we have a stronger socle equivalence, namely, an isomorphism 
$$\mu_i^m(\La)/\soc(\tilde{P}_i)\cong\La/\soc(P_i).$$  
In particular, if $S_i$ is a $d$-periodic module with projective resolution satisfying $P_i^k\in\add Q$, then 
a part of this resolution gives a periodic $\add Q$-resolution of length $d-2$, which is a special case. \medskip 

We leave this issue with the following open question. 

\begin{ques}\label{qu1} Let $\La$ be a weakly symmetric algebra. Is it true that, for every $d$-periodic simple module $S_i$ in $\mod\La$, 
the corresponding projective $P_i$ is $\add Q$-periodic (of period $d-2$)? \end{ques} \medskip 

\begin{rem}\label{rem:periodicapprox}\normalfont 
If $\mu_i^m(\La)\cong\La$, for some $m\geqslant 1$, then the mutation 
$\mu_i^{m+1}(\La)$ is the same as $\mu_i(\La)$, so the left $\add Q$-approximation of $(T,\bP^{(m)})\simeq P_i$ in 
$\proj\mu_i^m(\La)\simeq\proj\La$ is given by the following morphism in $\cK^b_\La$ 
$$(0,f^{m+1}):\bP^{(m)}\to P_i^{(1)},$$ 
where $f^{m+1}=f^1d_+$. One can continue obtaining an infinite left $\add Q$-resolution of $P_i$ of the form 
$$\wt{\bP}=\xymatrix{P_i\ar[r]^{f^1} & P_i^{(1)} \ar[r]^{f^2} & \dots\ar[r]^{f^m} & P_i^{(m)} \ar[r]^{f^{m+1}} & P_i^{(1)} 
\ar[r]^{f^2} & P_i^{(2)}\ar[r] & \dots },$$  
which is periodic and encodes all iterated silting mutations of $\La$ at projective $P_i$. That is, every iterated 
mutation $\mu_{P_i}^k(\La)$ has the form $\wt{\bP}^{(k)}\oplus Q$, where $\wt{\bP}^{(k)}$ is identified with a 
finite initial part of $\wt{\bP}$. \end{rem} 

\bigskip 

\section{Some examples}\label{sec:6} 

In this section, we show two examples of iterated mutations of symmetric algebras of period four with 
respect to vertices with loops (see Corollary \ref{cor:sp4}). In particular, we 
will see in both examples, that in fact, periodicity of a vertex implies $\add Q$-periodicity of the 
corresponding projective, which provides a positive answer for the Question \ref{qu1} in this case. 
We will show in the next section that the answer is yes for all weighted surface algebras. \medskip 

Let $Q$ be the following quiver 
$$\xymatrix@R=0.3cm{ & 3\ar[rd]^{\beta} \ar@<-.4ex>[dd]_{\xi}&& \\ 
\ar@(lu,ld)[]_{\rho} 1\ar[ru]^{\alpha} & & 2 \ar@<+0.44ex>[r]^{\ve} \ar[ld]^{\nu} & \ar@<+0.44ex>[l]^{\eta} 
5 \ar@(ru,rd)[]^{\sigma} \\ 
 &4\ar[lu]^{\delta} \ar@<-.4ex>[uu]_{\mu} && }$$ 
Then $Q$ is a triangulation quiver associated to a surface, whose triangulation consists of 
two proper triangles $(1 \ 3 \ 4)$, $(4 \ 3 \ 2)$ (with coherent orientation), and a self-folded triangle 
$(2 \ 5 \ 5)$, where $1$ is the unique border edge (see \cite[Section 4]{WSA} for details). This gives a 
permutation $f:Q_1\to Q_1$ of arrows of $Q$, which has the following four orbits (triangles) 
$$(\alpha \ \xi \ \delta), \ (\mu \ \beta \ \nu), \ (\ve \ \sigma \ \eta) \ \mbox{and} \ (\rho).$$ 
Denote by $g$ the permutation $g=\bar{f}:Q_1\to Q_1$, where $\overline{(-)}$ is the involution 
on a $2$-regular quiver $Q$. In this case $g$ has one orbit $(\sigma)$ of length $n_\sigma=1$, one 
orbit $(\xi \ \mu)$ of length $n_\xi=n_\mu=2$ and the remaining orbit 
$$(\alpha \ \beta \ \ve \ \eta \ \nu \ \delta \ \rho)$$ 
of length $n_\alpha=n_\beta=\cdots=7$. Now, take an arbitrary weight function 
$m_\bullet:Q_1\to\bN_{\geqslant 1}$ and a parameter function $c_\bullet:Q_1\to K\setminus\{0\}$, 
which are functions constant on $g$-orbits. We can also consider a border function $b_\bullet$ 
which is given by a single parameter $b=b_{1}$ assocaited to the unique border vertex $1$ of $Q_0$. 
Then $m_\bullet$ and $c_\bullet$ are determined by three weights $m:=m_\alpha\geqslant 1$, 
$n:=m_\xi\geqslant 1$ and $p:=m_\sigma\geqslant 2$, and three parameters $a=c_\alpha$, $c=c_{\xi}$ and $d=c_{\sigma}$, and the associated weighted surface algebra  
$$\La=\La(Q,f,m_\bullet,c_\bullet,b_\bullet)$$ 
is given as a quotient $\La=KQ/I$, where $I$ is generated by the following commutativity 
relations 
$$\alpha\xi=a(\rho\alpha\beta\ve\eta\nu\delta)^{m-1}\rho\alpha\beta\ve\eta\nu, \ 
\xi\delta=a(\beta\ve\eta\nu\delta\rho\alpha)^{m-1}\beta\ve\eta\nu\delta\rho, \ 
\delta\alpha=c(\mu\xi)^{n-1}\mu,$$  
$$\nu\mu=a(\ve\eta\nu\delta\rho\alpha\beta)^{m-1}\ve\eta\nu\delta\rho\alpha, \ 
\mu\beta=a(\delta\rho\alpha\beta\ve\eta\nu)^{m-1}\delta\rho\alpha\beta\ve\eta, \ 
\beta\nu=c(\xi\mu)^{n-1}\xi,$$  
$$\ve\sigma=a(\nu\delta\rho\alpha\beta\ve\eta)^{m-1}\nu\delta\rho\alpha\beta\ve, \ 
\sigma\eta=a(\eta\nu\delta\rho\alpha\beta\ve)^{m-1}\eta\nu\delta\rho\alpha\beta, \ 
\eta\ve=d\sigma^{p-1}$$ 
$$\rho^2=a(\alpha\beta\ve\eta\nu\delta\rho)^{m-1}\alpha\beta\ve\eta\nu\delta+b(\rho\alpha\beta\ve\eta\nu\delta)^m$$ 
and the zero relations of the form $\omega f(\omega) g(\omega)=0$, for all arrows $\omega$ 
except $\omega=\beta$, $\delta$ or $\eta$ (if $n=1$ or $p=2$), and $\omega g(\omega) f(g(\omega))$, for 
all arrows $\omega$ except $\omega=\alpha$, $\nu$ or $\ve$ (if $n=1$ or $p=2$). For more details we refer 
to \cite[Section 5]{WSA} (see also \cite{WSAG} and \cite{WSAC}). We only mention that all commutativity 
relations in $I$ (except the last involving $\rho^2$) have a common form $\theta f(\theta)-c_{\bar{\theta}}A_{\bar{\theta}}$, 
where for an arrow $\theta$, $A_\theta$ denotes the path $\theta g(\theta)\cdots g^{m_\theta n_\theta-2}(\theta)$ 
along the $g$-orbit of $\theta$. Note also that the last relation has a summand of analogous form  
$\rho^2-c_\alpha A_\alpha=\rho f(\rho)-c_{\bar{\rho}} A_{\bar{\rho}}$, but there is an additional 
summand from the socle. This covers all socle deformed weighted surface algebras given by the above 
triangulation quiver (see also \cite[Theorem 1.2]{BEHSY}). \medskip 

One can see that for $n=1$ or $p=2$, the arrows $\xi,\mu$ or $\sigma$ are virtual, so do not 
appear in the Gabriel quiver $Q_\La$ of $\La$. Since we want to consider mutations at vertices 
with loop, we will assume that always $p\geqslant 3$, so that $\rho$ remains in $Q_\La$. 
We may have $n=1$, and then $\La$ has presentation $\La=KQ_\La/I'$, where the Gabriel quiver 
$Q_\La$ is obtained from $Q$ by removing arrows $\xi,\mu$ and generators of $I'$ are 
obtained from generators of $I$ by deleting relations $\delta\alpha=\mu$ and $\beta\nu=\xi$ 
and substitute $\mu:=\delta\alpha$ and $\xi:=\beta\nu$ is all the remaining relations. For 
simplicity, we will work with $n\geqslant 2$, but analogous results can be repeated in case 
$n=1$. \medskip 

We recall that, by general theory \cite[see Theorem 1.3]{WSAG}, algebra $\La$ is symmetric and 
periodic of period $4$; in particular, all simple modules in $\mod\La$ are $d$-periodic 
for $d=4$. Moreover, the algebra $\La$ is tame, but it is not important in this paper. \medskip

Now, we will consider two examples of iterated mutation of $\La$ at vertices $5$ and $1$. Both 
these vertices admit a loop, so this case is not covered by the Main Theorem, but we can always construct 
appropriate periodic $\add Q$-resolution, which provides expected isomorphisms $\mu_i^2(\La)\cong\La$ 
(modulo socle of $P_i$). \medskip 

\begin{expl}\label{ex:6.1}\normalfont First, consider iterated mutations $\mu_5^k(\La)$ of $\La$ at vertex $5$, which 
are controlled by the left $\add Q$-resolution of the projective $P_5$, $\La=P_5\oplus Q$. In this case, 
we have an exact sequence in $\mod\La$ of the form 
$$\xymatrix{0 \ar[r]^{} & S_5 \ar[r] & P_5 \ar[r]^{\vec{\ve \\ \sigma}} & P_2\oplus P_5 \ar[r]^{d^2} & 
P_2\oplus P_5 \ar[r]^{\vec{\eta & \sigma }}  & P_5 \ar[r] & S_5\ar[r] & 0 }$$ 
which determines the (periodic) projective-injective resolution of the simple $S_5$. Note that 
$d^2$ is given by a matrix whose entries can be read off the minimal relations starting (or ending) 
at vertex $5$. Since the modules $P_5^-=P_5^+=P_2\oplus P_5$ have $P_5$ as a direct summand, the 
above sequence does not induce the left $\add Q$-resolution. \smallskip 

It follows from the relation involving $\ve\sigma$, that this path belongs to $\La\ve$, so 
the left $\add Q$-approximation of $P_5$ is a homomorphism of the form $f^{1}=\ve:P_5\to P_5^{(1)}$, 
where $P_5^{(1)}=P_2$ (given as a left multiplication by $\ve$). Similarly, one can check that 
$d_+=\eta:P_2\to P_5$ is a right $\add Q$-approximation of $P_5$ satisfying $\cok(d_+)=P_5/\rad P_5$. \medskip 

Finally, using known basis of projective modules $P_2$ and $\La e_2$ \cite[see Lemma 4.7]{WSAG}, one can see that 
the following sequence  
$$\bP=(\xymatrix{P_5 \ar[r]^{f^1} & P_5^{(1)} \ar[r]^{f^2} & P_5^{(2)}})$$ 
where $P_5^{(2)}=P_2$ and $f^2:P_2\to P_2$ is given as a left multiplication by $\ve\eta$, is 
exact, and moreover, it gives an $\add Q$-resolution of $P_5$ of length $m=2$ (note that $\ve\eta\ve=0$, due 
to the zero relations). It is also easy to check (using relations) that $\ker(f^1)=\soc(P_5)$. \medskip 

As a result, we conclude that $\mu_5(\La)=\End_{\cK^b_\La}(\bP^{(1)}\oplus Q)$ and the second 
iteration $\mu_5^{d-2}(\La)=\mu_5^2(\La)$ is an algebra of the form 
$$\End_{\cK^b_\La}(\bP^{(2)}\oplus Q).$$ 
Since $d_+=\eta:P_2\to P_5$ is a right $\add Q$-approximation for $P_i$ (satisfying $\ima(f^2)=\ker(d_+)$), 
we conclude that it admits a periodic (left) $\add Q$-resolution, and hence $\mu_5^2(\La)\sim\La$ (modulo $\soc(P_5)$), 
due to Corollary \ref{aaprox}. Actually, it is known \cite[see Theorem 1.2]{BEHSY} that $\La$ admits socle deformations only with 
respect to the vertex $1$ (having a border loop), so in this case, we have an isomorphism $\mu_5^2(\La)\cong\La$. \end{expl} \medskip 

\begin{expl}\label{ex:6.2}\normalfont In the second example, we will study iterated mutations $\mu_1^k(\La)$ of 
$\La$ at vertex $1$; $\La=P_1\oplus Q$. In particular, as in previous example, we will show that 
$\mu_1^2(\La)\sim\La$. This case is slightly different, since paths $\rho\alpha$ and $\delta\rho$ 
are not involved in minimal relations, but the general strategy stays the same. \medskip 

As before, we start with the (minimal) projective-injective resolution of $S_1$, which is an exact sequence 
in $\mod\La$ of the form 
$$\xymatrix{0\ar[r] & S_1 \ar[r] & P_1 \ar[r]^(0.35){\vec{\delta \\ \rho}} 
& P_4\oplus P_1 \ar[r] & P_3\oplus P_1 \ar[r]^(0.65){\vec{\alpha & \rho}} & P_1 \ar[r] & S_1 \ar[r] & 0 }$$ 
Similarly, it does not induce the $\add Q$-resolution, since the modules $P_1^-=P_4\oplus P_1$ and 
$P_1^+=P_3\oplus P_1$ are not in $\add Q$.  

Now, using the relation involving $\rho^2$, one can easily see that the map 
$f^1:P_1\to P_1^{(1)}=P_4\oplus P_4$ given as $f^1=\vec{\delta \\ \delta\rho}$ is a left $\add Q$-approximation 
of $P_1$. Exploiting relations and bases of projective modules over $\La$ (and $\La^{\op}$), we can 
check that the following sequence  
$$\bP=(\xymatrix{P_1 \ar[r]^{f^1} & P_1^{(1)} \ar[r]^{f^2} & P_1^{(2)}})$$ 
is exact (in $P_1^{(1)}$), where $P_1^{(2)}=P_3\oplus P_3$ and $f^2$ is given by the following matrix 
$$\vec{\xi & -aA\\ 0 & \xi},$$ 
where $A=(\beta\ve\eta\nu\delta\rho\alpha)^{m-1}\beta\ve\eta\nu$. Moreover, 
one can similarily see that $\bP$ is an $\add Q$-resolution of $P_1$ of length $m=2$. 
In particular, it follows that $\mu_1(\La)=\End_{\cK^b_\La}(\bP^{(1)}\oplus Q)$ and $\mu_1^2(\La)=
\End_{\cK^b_\La}(\bP^{(2)}\oplus Q)$. Now, straighforward calculations show that the map 
$d_+=\vec{\alpha & \rho\alpha}:P_1^{(2)}\to P_1$ is a right $\add Q$-approximation of $P_i$ satisfying 
$\ima(f^2)=\ker(d_+)$ and $\cok(d_+)=P_1/\rad P_1$, andconsequently, we conclude from Corollary \ref{aaprox} 
that again $\mu_1^{d-2}(\La)=\mu_1^2(\La)\sim\La$. In this case, $\mu_1^2(\La)$ has the same quiver and 
relations as $\La$, but with possibly different $b\in K$. \end{expl} \medskip 

Summing up, in the above two examples, we took a weighted surface algebra $\La$, for which all 
simple modules $S_i$ have period $4$, and we considered two iterated mutations of $\La$ with respect 
to a vertex $i$. In both cases, vertex $i$ admits a loop, so its (periodic) projective resolution does 
not satisfy assumptions of the Main Theorem. But still $\mu_i^2(\La)\sim\La$, since in both examples we have 
constructed a periodic (left) $\add Q$-resolution of the associated projective $P_i$. 

\bigskip  

\section{Proof of Corollary 4}\label{sec:cor4}
In the last short section we provide arguments for Corollary 4. Let $\La=\La(Q,f,m,c)$ be a weighted surface algebra 
of infinite representation type (different from the exceptional ones). Then it is known, that $\La$ is periodic of 
period $d=4$, and also all simple modules in $\mod\La$ are periodic of period $4$ (in infinite representation type we 
cannot have simples of period $2$; see \cite{Enote}). Consequently, by Corollary 1, we have $\mu_i^2(\La)\sim\La$, for 
any vertex $i$ without loop. Therefore, it suffices to show that $\mu_i^2(\La)\sim\La$, for a vertex $i$ with loop. 

By the construction (see \cite{WSA,WSAC}), $\La=KQ/I$ is given by the quiver $Q$ being a glueing of a finite number 
of the following three types of blocks 
$$\xymatrix@C=0.3cm@R=0.2cm{\\ \ar@(lu, ld)[]_{\alpha}\circ} \qquad \qquad
\xymatrix@C=0.6cm@R=0.2cm{&\\ \ar@(lu, ld)[]_{\gamma}\bullet\ar@<.35ex>[r]^{\alpha}&\ar@<.35ex>[l]^{\beta}\circ} 
\quad \qquad 
\xymatrix@C=0.3cm@R=0.2cm{&\circ\ar[ld]_{\alpha}&\\ \circ\ar[rr]_{\beta}&&\circ\ar[lu]_{\gamma}}$$ 
$$\mbox{ I }\qquad\qquad\quad\quad\mbox{ II }\qquad\qquad\qquad\mbox{ III }$$ 
corresponding to border edges, self-folded triangles, and triangles of the triangulated surface, respectively. By 
a glueing we mean a disjoint union of blocks, for which we glue every vertex $\circ$ with a vertex $\circ$ contained 
in another block. It is known that $Q_\La$ contains loops of two kinds: border loops (i.e. loops from a block of type I) 
or loops in the self-folded triangles (blocks of type II). We recall that each block gives an orbit of the 
induced permutation $f:Q_1\to Q_1$, which is $(\alpha)$ in type I, or $(\alpha \ \beta \ \gamma)$ in types II and 
III. As in example from Section \ref{sec:6}, we have an associated permutation $g:Q_1\to Q_1$, given as 
$g(\alpha)=\overline{f(\alpha)}$, where $\overline{(-)}$ is the involution for $Q$, and then ideal $I$ is 
generated by the relations of the form $\alpha f(\alpha)-c_{\ba} A_{\ba}$, $c_\alpha\in K^*$ constant on 
$g$-orbits, for all arrows $\alpha\in Q_1$ (except a socle deformed relation for border loops $\alpha$) and 
the zero relations $\alpha f(\alpha)g(f(\alpha))$ and $\alpha g(\alpha) f(g(\alpha))$, for some of the arrows 
$\alpha$ (see \cite[Definition 2.2]{WSAC}).  The paths $A_\alpha$ are of the form 
$\alpha g(\alpha)\cdots g^{m_\alpha n_\alpha-2}(\alpha)$, for weights $m_\alpha\geqslant 1$, also constant 
on $g$-orbits. We will only need relations $\alpha g(\alpha)f(g(\alpha))=0$, for arrows $\alpha$ 
such that $m_{f(\alpha)}n_{f(\alpha)}\geqslant 3$, and relations $\alpha f(\alpha) g(f(\alpha))$, 
for arrows $\alpha$ such that $m_{f^2(\alpha)}n_{f^2(\alpha)}\geqslant 3$. \medskip 

{\it Case 1}. Suppose $i$ is a vertex with a loop $\sigma$ contained in a block of the form 
$$\xymatrix@C=0.6cm@R=0.2cm{\ar@(lu, ld)[]_{\sigma}\bullet\ar@<.35ex>[r]^{\eta}&\ar@<.35ex>[l]^{\ve}\circ}$$ 
By definition, $\La$ admits the following relations 
$$\ve\sigma=c_{\bar{\ve}}A_{\bar{\ve}} \mbox{ and } \sigma\eta=c_{\eta}A_{\eta},$$  
where $c_{\bar{\ve}},c_\eta\in K^*$, $A_{\bar{\ve}}\in\La\ve$ and $A_\eta\in\eta\La$ are paths along $g$-orbits of 
$\bar{\ve}$ or $\eta$, respectively. This shows that any path of the form $\ve\sigma^k$ or $\sigma^k\eta$ belong 
to $\La\ve$ or $\eta\La$, respectively, and consequently, $f^1=\ve:P_i\to P_i^{(1)}=P_j$ is a left $\add Q$-approximation 
of $P_i$, whereas $d_+=\eta:P^{(2)}_i=P_j\to P_i$ is its right $\add Q$-approximation. 

Moreover, we have zero relations in $\La$ of the form $\ve\eta\ve=\ve g(\ve) f(g(\ve))=0$, because 
$f(\ve)=\sigma$ satisfies $m_\sigma n_\sigma\geqslant 3$ (indeed, otherwise $A_\sigma=\sigma$, so 
$\eta\ve=\eta f(\eta)=c_{\bar{\eta}}A_{\bar{\eta}}=c_\sigma A_\sigma=c_\sigma \sigma$ and $\sigma\in J^2$ 
is not an arrow of $Q_\La$). Similarly, we have $\eta\ve\eta=\eta f(\eta) g(f(\eta))=0$, since again 
$f^2(\eta)=\sigma$ satisfies required condition. Using known description of bases for projective $\La$-modules 
\cite[see Lemma 4.7]{WSAG} (and other relations), one can compute as in Example \ref{ex:6.1} that the map 
$f^2=\ve\eta:P_j\to P_j$ gives a left $\add Q$-approximation of $(\xymatrix{P_i\ar[r]^{f^1} & P_j})$ in $\cK^b_\La$, 
and the following sequence is exact 
$$\xymatrix{P_i\ar[r]^{f^1} & P_j\ar[r]^{f^2} & P_j\ar[r]^{d_+} & P_i}.$$ 
It is easy to check that $\ker(f^1)=\soc(P_i)$ and $\cok(d_+)=P_i/\rad P_i$, hence it follows that $P_i$ is 
$\add Q$-periodic (of period $2$). Therefore, by to Corollary \ref{aaprox}, we have $\mu_i^2(\La)\sim\La$, 
and we are done in this case. \medskip 

{\it Case 2.} Now, suppose $i$ admits a loop $\rho$ in a block of type I. In this case, the Gabriel quiver $Q_\La$ 
has a subquiver of the form 
$$\xymatrix{x\ar[r]^{\delta} & i \ar@(ul,ur)[]^{\rho} \ar[r]^{\alpha} & j}$$ 
with $j\neq x$, and we have a $g$-orbit of the form $(\cdots \ \delta \ \rho \ \alpha \ \cdots)$. 
As in Example \ref{ex:6.2}, we have $\rho^2=\rho f(\rho)=c_\alpha A_\alpha$, so $\rho^2$ 
belongs to $\La\delta$, because $A_\alpha$ ends with $g^{m_\alpha n_\alpha-2}(\alpha)=\delta$. 
Clearly, also $\rho^2\in\alpha\La$, so the left $\add Q$-approximation of $P_i$ has the form 
$f^1=\vec{\delta \\ \delta\rho}:P_i\to P_x\oplus P_x$, while the right $\add Q$-approximation is given as 
follows $d_+=[\rho\alpha \ \alpha ]:P_j\oplus P_j\to P_i$. One can show using relations and bases that the 
left $\add Q$-approximation of $ $ in $\cK^b_\La$ is given by the map $f^2:P_x\oplus P_x\to P_j\oplus P_j$ 
defined by the matrix 
$$\vec{\xi & -aA \\ 0 & \xi},$$ 
where $A$ is a path along the $g$-orbit of the form $A=\beta g(\beta)\dots \nu$ (of length $m_\beta n_\beta-3$), 
with $\beta=g(\alpha)$. Hence again, the maps $f^1,f^2$ together with $d_+$ give rise to a periodic 
$\add Q$-resolution. As a result, also in this case we have $\mu_i^2(\La)\sim\La$, by Corollary \ref{aaprox}, 
and the proof of Corollary 4 is now complete.

\end{document}